\newcommand{\black}{\color{black}}
\theoremstyle{plain}%
  \newtheorem{theorem}{Theorem}[section]
  \newtheorem{corollary}[theorem]{Corollary}
  \newtheorem{proposition}[theorem]{Proposition}
  \newtheorem{lemma}[theorem]{Lemma}
  \newtheorem{example}[theorem]{Example}
  \newtheorem{definition}[theorem]{Definition}
\newtheorem{remark}[theorem]{Remark}
\newfont{\hueca}{msbm10}
\def\hu #1{\hbox{\hueca #1}}\def\hu #1{\hbox{\hueca #1}}
\begin{document}

\title{Split Lie-Rinehart algebras}

\thanks{The first, second and fourth authors acknowledge financial assistance by the Centre for Mathematics of the University of Coimbra -- UID/MAT/00324/2013, funded by the Portuguese Government through FCT/MEC and co-funded by the European Regional Development Fund through the Partnership Agreement PT2020. Third and fourth authors are supported by the PCI of the UCA `Teor\'\i a de Lie y Teor\'\i a de Espacios de Banach', by the PAI with project numbers FQM298, FQM7156 and by the project of the Spanish Ministerio de Educaci\'on y Ciencia  MTM2013-41208P. The fourth author acknowledges the Funda\c{c}\~{a}o para a Ci\^{e}ncia e a Tecnologia for the grant with reference SFRH/BPD/101675/2014.}

\author[H. Albuquerque ]{Helena~Albuquerque}

\address{Helena~Albuquerque, CMUC, Departamento de Matem\'atica, Universidade de Coimbra, Apartado 3008,
3001-454 Coimbra, Portugal. \hspace{0.1cm} {\em E-mail address}: {\tt lena@mat.uc.pt}}

\author[E. Barreiro]{Elisabete~Barreiro}

\address{Elisabete~Barreiro, CMUC, Departamento de Matem\'atica, Universidade de Coimbra, Apartado 3008,
3001-454 Coimbra, Portugal. \hspace{0.1cm} {\em E-mail address}: {\tt mefb@mat.uc.pt}}{}

\author[A.J. Calder\'on]{A.J.~Calder\'on}

\address{A.J.~Calder\'on, Departamento de Matem\'aticas, Universidad de C\'adiz, Campus de Puerto Real, 11510, Puerto Real, C\'adiz, Espa\~na. \hspace{0.1cm} {\em E-mail address}: {\tt ajesus.calderon@uca.es}}{}

\author[Jos\'{e} M. S\'{a}nchez-Delgado]{Jos\'{e} M. S\'{a}nchez-Delgado}

\address{Jos\'{e} M. S\'{a}nchez-Delgado, CMUC, Departamento de Matem\'atica, Universidade de Coimbra, Apartado 3008, 3001-454 Coimbra, Portugal. \hspace{0.1cm} {\em E-mail address}: {\tt txema.sanchez@mat.uc.pt}}

\begin{abstract}
We introduce the class of split Lie-Rinehart algebras as the natural extension of the one of split Lie algebras. We show that if $L$ is a tight split Lie-Rinehart algebra over an associative and commutative algebra $A,$ then $L$ and $A$ decompose as the orthogonal direct sums $L = \bigoplus_{i \in I}L_i$, $A = \bigoplus_{j \in J}A_j$, where  any $L_i$ is a nonzero ideal of $L$, any  $A_j$ is a nonzero ideal of $A$, and both decompositions satisfy   that for any $i \in I$ there exists a unique $\tilde{i} \in J$ such that $A_{\tilde{i}}L_i \neq 0$. Furthermore any   $L_i$ is a split Lie-Rinehart algebra over $A_{\tilde{i}}$.
 Also,   under mild conditions, it is shown that the above decompositions of $L$ and $A$ are  by means of the family of their, respective, simple ideals.

\medskip

{\it Keywords}:  Lie-Rinehart algebra, split algebra,  root space, simple component, structure theory.

{\it 2010 MSC}: 17A60, 17B22, 17B60.

\medskip

\end{abstract}

\maketitle

\section{Introduction and first definitions}

Lie-Rinehart algebras were introduced by J. Herz in \cite{Herz}, being their theory mainly developed by R. Palais \cite{Palais} and G. Rinehart \cite{Rinehart}. A Lie-Rinehart algebra can be thought as  a Lie $\mathbb K$-algebra, which is simultaneosly an $A$-module, where $A$ is an associative and commutative $\mathbb K$-algebra, in such a way that both structures are related in an appropriate way. We cand find in \cite{Huebschmann,Huebschmann2,Huebschmann3} a first approach to this class of algebras. In the last years, Lie-Rinehart algebras have been considered in many areas of Mathematics, particulary from a geometric viewpoint (see for instance \cite{Mackenzie}) and of course from an algebraic viewpoint \cite{Recent1, Recent2, Recent3}. Some generalizations of Lie-Rinehart algebras, such as Lie-Rinehart superalgebras \cite{Chemla} or restricted Lie-Rinehart algebras \cite{Dokas}, have been recently studied.

On the other hand, we recall that the class of split Lie  algebras is specially related to addition quantum numbers, graded contractions and deformations. For instance, for a phy\-sical system  $L$, it is interesting to know in detail the structure of the split decomposition because its roots can be seen as certain eigenvalues which are the additive quantum numbers characterizing the state of such system. We note that  determining   the structure of different types of split algebras are becoming more meaningful in the area of research of mathema\-tical physics. In fact,  the structure of different classes of split algebras have been recently studied by using techniques of connections of roots (see for instance \cite{HomLie,YoLie,Yopoisson,tri,AMSi,Cao8,Cao3}).

In the present paper we introduce the class of split Lie-Rinehart algebras $(L,A)$ as the natural extension of the one of split Lie algebras, and study its structure. Our techniques consist in considering  the roots system of  $L$ as well as the weights system of $A$. In these two sets we introduce two  different  notions of  connections, the first one  among the elements in  the roots system of  $L$ and the second one among the elements in  the weights system of  $A$. Later, we relate both concepts to get our main results. We show that if $L$ is a tight split Lie-Rinehart algebra (with restrictions neither its dimension nor its base field) over an associative and commutative algebra $A,$ then $L$ and $A$ decompose as the direct sums  $$L = \bigoplus\limits_{i \in I}L_i, \hspace{0.5cm} A = \bigoplus\limits_{j \in J}A_j,$$ where any $L_i$ is a nonzero ideal of $L$ satisfying $[L_i,L_k]=0$ when $i \neq k$, and any  $A_j$ is a nonzero ideal of $A$ such that $A_jA_l=0$ when $j \neq l$. Moreover,  both decompositions satisfy that for any $i \in I$ there exists a unique $\tilde{i} \in J$ such that $$A_{\tilde{i}}L_i \neq 0.$$ Furthermore any   $L_i$ is a split Lie-Rinehart algebra over $A_{\tilde{i}}$.
Also, under mild conditions, it is shown that the above decompositions of $L$ and $A$ are  by means of the family of their, respective, simple ideals.

\medskip

Our paper is organized as follows. In Section 2 we develop    connection techniques in the framework of Lie-Rinehart algebras $(L,A)$ and  apply, as a first step,  all of these techniques to the study of the inner  structure of $L$. In Section 3  we get, as a second step,  a  decomposition of $A$ as direct sum of adequate ideals. In Section 4 we relate the obtained results  on $L$ and $A$, getting in Sections 2 and 3,   to prove our above mentioned  main results. Section 5 is devoted to show that, under mild conditions, the given decompositions of $L$ and $A$ are by means of the family of their, corresponding, simple ideals.

\begin{definition}\rm
Let ${\mathbb K}$ be an arbitrary  base field and $A$ a commutative and associative algebra over ${\mathbb K}$. A {\it derivation} on $A$ is a ${\mathbb K}$-linear map $D : A \to A$ which satisfies
\begin{equation}
D(ab) = D(a)b + aD(b) \hspace{0.2cm} \mbox{\it (Leibniz's law)}
\end{equation}
for all $a,b \in A$.
The set $\mbox{Der}(A)$ of all derivations of $A$ is a Lie ${\mathbb K}$-algebra with Lie bracket $[D,D'] = DD' - D'D$, and an $A$-module simultaneosly. These two structures are related by the following identity $$[D,aD'] = a[D,D'] + D(a)D', \hspace{0.2cm} \mbox{for all} \hspace{0.1cm} D,D' \in  \mbox{Der}(A).$$
\end{definition}

\begin{definition}\rm
A {\it Lie-Rinehart algebra} over an (associative and  commutative) ${\mathbb K}$-algebra $A$ is a Lie ${\mathbb K}$-algebra $L$ endowed with an $A$-module structure and with a map (usually called {\it anchor}) $$\rho : L \to \mbox{Der}(A),$$ which is simultaneously an $A$-module and a Lie algebras homomorphism, and  such that the following relation holds
\begin{equation}\label{fundamental}
[v,aw] = a[v,w] + \rho(v)(a)w,
\end{equation}
for any  $ v,w \in L$ and $a \in A.$ We denote it by $(L,A)$ or just by $L$ if there is not any possible confusion.
\end{definition}

\begin{example}\label{exa1}\rm
Any  Lie algebra $L$ is a Lie-Rinehart algebra over   $A := \mathbb{K}$ as consequence of  $\mbox{Der}({\mathbb K}) = 0$.
\end{example}

\begin{example}\label{exa2}\rm
 Any associative and commutative $\mathbb K$-algebra $A$ gives rise to  a Lie-Rinehart algebra by taking  $L := \mbox{Der}(A)$ and $\rho := Id_{\mbox{Der}(A)}$.
\end{example}

Throughout this paper $(L,A)$ is a Lie-Rinehart algebra with  restrictions neither on the dimension of $L$, nor  on the   dimension of $A$, nor on the base field ${\hu K}$. A {\it subalgebra} $(S,A)$ of $(L,A),$ $S$ for short, is a Lie subalgebra of $L$ such that  $AS \subset S$ and satisfying that $S$ acts on $A$ via the composition $$ S\hookrightarrow L \overset{\rho} \to {\mbox{Der}(A)}.$$

A subalgebra $(I,A)$, $I$ for short, of $L$ is called an {\it ideal} if $I$ is a Lie ideal of $L$ and satisfies
\begin{equation}\label{cond_ideal}
\rho(I)(A)L \subset I.
\end{equation}
As example of an  ideal we have ${\rm Ker}\rho$, the kernel of $\rho$.
\medskip


We say that a Lie-Rinehart algebra $(L,A)$ is {\it simple} if $[L,L] \neq 0$, $AA \neq 0$, $AL \neq 0$ and its only ideals are   $\{0\}$, $L$  and ${\rm Ker}\rho$.

\medskip

Let us introduce the class of split algebras in the framework of Lie-Rinehart algebras. We begin  by recalling  the definition of a split Lie algebra.

\begin{definition}\rm
A {\it splitting Cartan subalgebra} $H$ of a Lie algebra $L$ is defined as a maximal abelian subalgebra (MASA) of $L$ satisfying that the adjoint mappings $ad(h),$ for $h \in H,$ are simultaneously diagonalizable. If $L$ contains a splitting Cartan subalgebra $H$ then $L$ is called a {\it split Lie algebra}.
\end{definition}

Meaning that we have a decomposition of the Lie algebra $L$ as $$L = H \oplus (\bigoplus\limits_{\gamma \in \Gamma}L_{\gamma}),$$ where $$L_{\gamma} := \{v_{\gamma} \in L : [h,v_{\gamma}] = \gamma(h)v_{\gamma} \hspace{0.2cm} {\rm for} \hspace{0.1cm} {\rm any} \hspace{0.1cm} h \in H\},$$ for a linear functional $\gamma : H \to \hu{K},$ and where $\Gamma := \{\gamma \in H^* \setminus \{0\} : L_{\gamma} \neq 0\}$ denotes  the corresponding {\it roots system}. The linear subspace $L_{\gamma}$, for $\gamma \in \Gamma$, is called {\it root space of} $L$ {\it associated to} $\gamma,$ the elements $\gamma \in \Gamma \cup \{0\}$ are called {\it roots} of $L.$

\begin{definition}\label{split}\rm
A {\it split Lie-Rinehart algebra} (with respect to a MASA $H$ of the Lie algebra $L$) is a Lie-Rinehart algebra  $(L,A)$  in which the Lie algebra $L$ contains a splitting Cartan subalgebra $H$ and the algebra $A$ is a weight module (with respect to $H$) in the sense that  $A$ decomposes  as $$A = A_0 \oplus (\bigoplus\limits_{\alpha \in \Lambda}A_{\alpha}),$$ where $$A_{\alpha} := \{a_{\alpha} \in A : \rho(h)(a_{\alpha}) = \alpha(h)a_{\alpha}, \hspace{0.1cm} {\rm for} \hspace{0.1cm} {\rm any} \hspace{0.1cm}  h \in H\},$$ for a linear functional $\alpha : H \to \mathbb{K}$, and where $\Lambda := \{\alpha \in H^* \setminus \{0\} : A_{\alpha} \neq 0\}$ denotes  the  {\it weights system} of $A$. The linear subspace $A_{\alpha},$ for $\alpha \in \Lambda,$ is called the {\it weight space} of $A$ {\it associate to} $\alpha,$ the elements $\alpha \in \Lambda \cup \{0\}$ are called {\it weights} of $A.$
\end{definition}

Taking into account Example \ref{exa1},  split Lie algebras are  examples  of  split Lie-Rinehart algebras.
The present paper extends the structure theorems getting in \cite{YoLie} for split Lie algebras to the class of split Lie-Rinehart algebras.

 \medskip

From now on, $ (L,A)$ denotes a split Lie-Rinehart algebra (with respect to a MASA $H$ of $L$) being
$$\hbox{$L = L_0 \oplus (\bigoplus\limits_{\gamma \in \Gamma}L_{\gamma})$ and  $A = A_0 \oplus (\bigoplus\limits_{\alpha \in \Lambda}A_{\alpha})$}$$  the corresponding root and weight spaces decompositions.

\begin{lemma}\label{lema-1}
For any $\gamma, \xi \in \Gamma \cup \{0\}$ and $\alpha, \beta \in \Lambda \cup \{0\}$ the following assertions hold.
\begin{enumerate}
\item[{\rm i)}] $L_0 = H$.

\item[{\rm ii)}] If $[L_{\gamma}, L_{\xi}] \neq 0$ then $\gamma+\xi \in \Gamma \cup \{0\}$ and $[L_{\gamma}, L_{\xi}] \subset L_{\gamma + \xi}$.

\item[{\rm iii)}] If $A_{\alpha}A_{\beta} \neq 0$ then $\alpha+\beta \in \Lambda \cup \{0\}$ and $A_{\alpha}A_{\beta} \subset A_{\alpha + \beta}$.

\item[{\rm iv)}] If $A_{\alpha}L_{\gamma}\neq 0$ then $\alpha+\gamma \in \Gamma \cup \{0\}$ and $A_{\alpha}L_{\gamma} \subset L_{\alpha + \gamma}$.

\item[{\rm v)}] If $\rho(L_{\gamma})(A_{\alpha}) \neq 0$ then $\gamma+\alpha \in \Lambda \cup \{0\}$ and $\rho(L_{\gamma})(A_{\alpha}) \subset A_{\gamma + \alpha}.$
\end{enumerate}
\end{lemma}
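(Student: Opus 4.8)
The plan is to separate item~(i), which is the only part genuinely using the maximality of $H$ rather than an identity, from items (ii)--(v), all of which follow the same elementary mechanism: push an arbitrary $h \in H$ through the relevant product and read off the resulting weight. The four structural identities invoked are, respectively, the Jacobi identity of $L$, the Leibniz rule for the derivation $\rho(h)$ of $A$, the fundamental identity \eqref{fundamental}, and the fact that $\rho$ is a homomorphism of Lie algebras.

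For~(i), note that $L_0 = \{v \in L : [h,v] = 0 \text{ for all } h \in H\}$ is precisely the centralizer $C_L(H)$, so $H \subseteq L_0$ because $H$ is abelian. Conversely, if $x \in L_0$ then $H + \mathbb{K}x$ is an abelian subalgebra of $L$ containing $H$ (all brackets among elements of $H$ vanish, $[h,x] = 0$ for $h \in H$, and $[x,x] = 0$), so maximality of $H$ forces $x \in H$; hence $L_0 = H$.

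For~(ii), given $v_\gamma \in L_\gamma$, $v_\xi \in L_\xi$ and $h \in H$, the Jacobi identity yields
\[
[h,[v_\gamma,v_\xi]] = [[h,v_\gamma],v_\xi] + [v_\gamma,[h,v_\xi]] = (\gamma+\xi)(h)\,[v_\gamma,v_\xi],
\]
so $[L_\gamma,L_\xi] \subseteq L_{\gamma+\xi}$, the latter understood in the sense of the definition of root space; if $[L_\gamma,L_\xi] \neq 0$ then $L_{\gamma+\xi} \neq 0$, which by the very form of the decomposition $L = L_0 \oplus \bigoplus_{\gamma \in \Gamma} L_\gamma$ means $\gamma+\xi \in \Gamma \cup \{0\}$. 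Items (iii), (iv) and (v) run along the identical template, each with its single appropriate identity: for~(iii) expand $\rho(h)(a_\alpha a_\beta)$ by the Leibniz rule; for~(iv) expand $[h, a_\alpha v_\gamma]$ via \eqref{fundamental} with $v = h$, $w = v_\gamma$, $a = a_\alpha$; and for~(v) expand $\rho(h)\bigl(\rho(v_\gamma)(a_\alpha)\bigr)$ using $\rho([h,v_\gamma]) = [\rho(h),\rho(v_\gamma)]$ as operators on $A$ together with $[h,v_\gamma] = \gamma(h)v_\gamma$. In every case the product is shown to sit in the weight space attached to the sum of the two weights involved, and its nonvanishing then places that sum in $\Lambda \cup \{0\}$ (resp. $\Gamma \cup \{0\}$) and the product inside the corresponding weight space.

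I expect no real obstacle here; the only points deserving care are the appeal to maximality of $H$ in~(i) (where no computation can substitute for it) and the verification that the displayed identities, and their conclusions, remain valid and consistent with $L_0 = H$ and with $A_0$ when one or both of the weights in play is $0$ --- which is automatic, since the functionals and the decompositions of $L$ and $A$ are arranged precisely so that the zero weight behaves like any other.
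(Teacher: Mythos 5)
Your proposal is correct and follows essentially the same route as the paper: items (iii)--(v) are exactly the paper's computations (Leibniz rule for $\rho(h)$, the identity \eqref{fundamental} with $v=h$, and $\rho$ being a Lie algebra homomorphism), while for (i) and (ii) the paper simply cites \cite[Section 1]{YoLie}, and your maximality argument for $L_0=H$ and the Jacobi-identity computation are the standard arguments given there.
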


\begin{proof}
i) and ii) are proved in \cite[Section 1]{YoLie}. 


iii) Let $a_{\alpha} \in A_{\alpha}, a_{\beta} \in A_{\beta}.$ For any $h \in H$ we have that $\rho(h)$ is a derivation in $A.$ Then
\begin{eqnarray*}
\begin{split}
& \rho(h)(a_{\alpha}a_{\beta}) = \rho(h)(a_{\alpha}) a_{\beta} + a_{\alpha}\rho(h)(a_{\beta}) = \alpha(h)a_{\alpha} a_{\beta} + a_{\alpha}\beta(h)a_{\beta} = (\alpha + \beta)(h)(a_{\alpha}a_{\beta}).
\end{split}
\end{eqnarray*}
Therefore $a_{\alpha}a_{\beta} \in A_{\alpha+\beta}$.

iv) Let $a_{\alpha} \in A_{\alpha}, v_{\gamma} \in L_{\gamma}.$ By using Equation \eqref{fundamental} we get
\begin{eqnarray*}
\begin{split}
[h, a_{\alpha}v_{\gamma}] &= a_{\alpha}[h,v_{\gamma}] + \rho(h)(a_{\alpha})v_{\gamma} = a_{\alpha}(\gamma(h)v_{\gamma}) + \alpha(h)a_{\alpha}v_{\gamma}\\
&= (\gamma(h)+\alpha(h))a_{\alpha}v_{\gamma} = (\alpha + \gamma)(h)a_{\alpha}v_{\gamma}.
\end{split}
\end{eqnarray*}
So $a_{\alpha}v_{\gamma} \in L_{\alpha+\gamma}$.

\medskip

v) For $v_{\gamma} \in L_{\gamma}$ and $a_{\alpha} \in A_{\alpha}$ we have
\begin{eqnarray*}
\begin{split}
& \quad \rho(h)\bigl(\rho(v_{\gamma})(a_{\alpha})\bigr) = \bigl(\rho(h)\rho(v_{\gamma})\bigr)(a_{\alpha}) = \rho([h,v_{\gamma}])(a_{\alpha}) + \bigl(\rho(v_{\gamma})\rho(h)\bigr)(a_{\alpha})\\
& \quad \quad \quad = \rho\bigl(\gamma(h)v_{\gamma}\bigr)(a_{\alpha}) + \rho(v_{\gamma})\bigl(\rho(h)(a_{\alpha})\bigr) = \rho\bigl(\gamma(h)v_{\gamma}\bigr)(a_{\alpha}) + \rho(v_{\gamma})\bigl(\alpha(h)(a_{\alpha})\bigr)\\
& \quad \quad \quad = \gamma(h)\rho(v_{\gamma})(a_{\alpha}) + \alpha(h)\rho(v_{\gamma})(a_{\alpha}) = (\gamma + \alpha)(h)\rho(v_{\gamma})(a_{\alpha}),
\end{split}
\end{eqnarray*}
where the second equality comes from the fact that $\rho$ is a Lie algebra homomorphism. We proved $\rho(v_{\gamma})(a_{\alpha}) \in A_{\gamma+\alpha}$.
\end{proof}

\begin{remark}
Observe that Lemma \ref{lema-1}-iii) implies that  $A_0$ is a subalgebra of $A$.
\end{remark}

\section{Connections in the roots system of $L$. Decompositions of $L$}

Next we connect the set of nonzero roots of $L$ through nonzero roots of $L$ and nonzero weights of $A$,  considered both as elements in $H^*.$ We define $-\Gamma := \{-\gamma : \gamma \in \Gamma\}$ where $(-\gamma)(h):=-\gamma(h)$.  In a similar way we define  $-\Lambda := \{-\alpha : \alpha \in \Lambda\}.$ Finally, let us denote $$\hbox{ $\pm \Gamma := \Gamma \cup -\Gamma$ and $\pm \Lambda := \Lambda \cup -\Lambda.$}$$

\medskip

\noindent In the next definition the sum of elements in $\pm \Lambda \cup \pm \Gamma$ is taken in $H^*$.
\begin{definition}\label{connection}\rm
Let $\gamma, \xi \in \Gamma$. We say that $\gamma$ is {\it connected} to $\xi$ if either $\xi = \epsilon\gamma$ for some $\epsilon \in \{1,-1\}$, or there exists $\{\zeta_1,\zeta_2,\ldots,\zeta_n\}\subset \pm \Lambda \cup \pm \Gamma$, with $n \geq 2$, such that

\begin{enumerate}
\item[i)] $\zeta_1 = \gamma$.
\item[ii)] $\zeta_1+\zeta_2 \in \pm \Gamma,$

\vspace{0.05cm} $\vdots$

\hspace{-0.5cm} $\zeta_1 + \zeta_2 + \cdots + \zeta_{n-1} \in \pm \Gamma.$

\item[iii)] $\zeta_1 + \zeta_2 + \cdots + \zeta_n \in \{\xi, -\xi\}.$
\end{enumerate}

\noindent We also say that $\{\zeta_1,\ldots,\zeta_n\}$ is a {\it connection} from $\gamma$ to $\xi$.
\end{definition}



\begin{proposition}\label{pro1}
The relation $\sim$ in $\Gamma$, defined by $\gamma \sim \xi$ if and only if $\gamma$ is connected to $\xi$, is an equivalence relation.
\end{proposition}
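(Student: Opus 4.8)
The plan is to check reflexivity, symmetry and transitivity of $\sim$ straight from Definition~\ref{connection}; the only genuine issue is bookkeeping of signs, since the definition forces a connection to \emph{start} exactly at $\gamma$ (condition i)) while allowing it to \emph{end} at either $\xi$ or $-\xi$ (condition iii)). Reflexivity is immediate: $\gamma = 1\cdot\gamma$ with $1\in\{1,-1\}$, so $\gamma\sim\gamma$. Two remarks will be used repeatedly. First, since $\pm\Lambda\cup\pm\Gamma$ and $\pm\Gamma$ are stable under $x\mapsto -x$, if $\{\zeta_1,\dots,\zeta_n\}$ is a connection from $\gamma$ to $\xi$, then for any $\epsilon\in\{1,-1\}$ the family $\{\epsilon\zeta_1,\dots,\epsilon\zeta_n\}$ is a connection from $\epsilon\gamma$ to $\xi$ (all partial sums get multiplied by $\epsilon$, and $\{\xi,-\xi\}$ is $\epsilon$-invariant). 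Second, by condition iii) a connection from $\gamma$ to $\xi$ is simultaneously a connection from $\gamma$ to $-\xi$. In particular ``$\gamma$ connected to $\xi$'' is insensitive to replacing $\xi$ by $-\xi$, and the trivial case $\xi=\epsilon\gamma$ enjoys the analogous symmetries.

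For symmetry, assume $\gamma\sim\xi$. The trivial case $\xi=\epsilon\gamma$ gives $\gamma=\epsilon\xi$, hence $\xi\sim\gamma$. Otherwise take a connection $\{\zeta_1,\dots,\zeta_n\}$ from $\gamma$ to $\xi$ with $n\geq 2$ and pick $\epsilon\in\{1,-1\}$ with $\zeta_1+\cdots+\zeta_n=\epsilon\xi$. I would then reverse it: set $\eta_1:=\xi$ and $\eta_k:=-\epsilon\,\zeta_{n+2-k}$ for $2\leq k\leq n$. A telescoping computation shows $\eta_1+\cdots+\eta_k=\epsilon(\zeta_1+\cdots+\zeta_{n+1-k})$ for every $2\leq k\leq n$; for $k\leq n-1$ the right-hand side is $\epsilon$ times a partial sum of the original connection, hence lies in $\pm\Gamma$, while for $k=n$ it equals $\epsilon\gamma\in\{\gamma,-\gamma\}$. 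Since each $\eta_k$ clearly lies in $\pm\Lambda\cup\pm\Gamma$ and $n\geq 2$, the family $\{\eta_1,\dots,\eta_n\}$ is a connection from $\xi$ to $\gamma$, so $\xi\sim\gamma$.

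For transitivity, suppose $\gamma\sim\xi$ and $\xi\sim\delta$. If either link is of the trivial type $\xi=\pm\gamma$ or $\delta=\pm\xi$, then by the two remarks above one of the two connections already becomes, after multiplying all its terms by a suitable sign, a connection from $\gamma$ to $\delta$; so we may assume there are nontrivial connections $\{\zeta_1,\dots,\zeta_n\}$ from $\gamma$ to $\xi$ and $\{\vartheta_1,\dots,\vartheta_m\}$ from $\xi$ to $\delta$ with $n,m\geq 2$, $\zeta_1=\gamma$, $\vartheta_1=\xi$. Choosing $\epsilon\in\{1,-1\}$ with $\zeta_1+\cdots+\zeta_n=\epsilon\xi$, I would glue them into the family $\{\zeta_1,\dots,\zeta_n,\epsilon\vartheta_2,\dots,\epsilon\vartheta_m\}$ of length $n+m-1\geq 2$, which starts at $\zeta_1=\gamma$ and has all terms in $\pm\Lambda\cup\pm\Gamma$. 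Its partial sums of length $\leq n$ are exactly those of the first connection (the last of them being $\epsilon\xi\in\pm\Gamma$), while for $1\leq j\leq m-1$ the partial sum of length $n+j$ equals $(\zeta_1+\cdots+\zeta_n)+\epsilon(\vartheta_2+\cdots+\vartheta_{j+1})=\epsilon(\vartheta_1+\cdots+\vartheta_{j+1})$, which lies in $\pm\Gamma$ when $j\leq m-2$ and in $\{\delta,-\delta\}$ when $j=m-1$. Hence the glued family is a connection from $\gamma$ to $\delta$, so $\gamma\sim\delta$. The argument is entirely elementary; the one place that needs care — and the only reason this is not a one-liner — is the insertion of the correcting signs $\epsilon$ when reversing and when concatenating connections, which is forced precisely by the asymmetry between conditions i) and iii) of Definition~\ref{connection}.
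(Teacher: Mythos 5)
Your proof is correct: reflexivity is the trivial case, and your reversal (with the sign $\epsilon$ correcting for condition iii)) and concatenation (noting that the junction partial sum $\epsilon\xi$ lies in $\pm\Gamma$) are exactly the standard verifications, with the sign bookkeeping handled properly. The paper itself gives no details, simply citing the analogous result in \cite{YoLie}, and your argument is essentially that same reverse-and-glue approach written out in full.
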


\begin{proof}
Similar to the proof of  \cite[Proposition 2.1]{YoLie}.
\end{proof}

\begin{remark}\label{Remark}
Let $\xi, \gamma \in \Gamma$ such that $\xi \sim \gamma.$ If $\gamma + \mu \in \Gamma,$ for $\mu \in \Lambda \cup \Gamma,$ then $\xi \sim \gamma + \mu$. Considering the connection $\{\gamma,\mu\}$ we get $\gamma \sim \gamma + \mu,$ and by transitivity $\xi \sim \gamma + \mu$. 

\end{remark}

By Proposition \ref{pro1} the connection relation is an equivalence relation in $\Gamma$. From here, we can consider the quotient set $$\Gamma / \sim := \{[\gamma]: \gamma \in \Gamma\},$$ becoming $[\gamma]$ the set of nonzero roots of $L$ which are connected to $\gamma$. Our next goal is to associate an (adequate) ideal $I_{[\gamma]}$ of the Lie-Rinehart algebra $(L,A)$ to each $[\gamma]$. Fix $\gamma \in \Gamma$, we start by defining the set $L_{0,[\gamma]} \subset L_0$ as follows: $$L_{0,[\gamma]} := \bigl(\sum\limits_{\xi \in [\gamma], -\xi \in \Lambda}A_{-\xi}L_{\xi}\bigr) + \bigl(\sum\limits_{\xi \in [\gamma]}[L_{-\xi}, L_{\xi}]\bigr).$$ Next, we define $$L_{[\gamma]} := \bigoplus\limits_{\xi \in [\gamma]}L_{\xi}.$$ Finally, we denote by $I_{[\gamma]}$ the direct sum of the two subspaces above, that is, $$I_{[\gamma]} := L_{0,[\gamma]} \oplus L_{[\gamma]}.$$

\begin{proposition}\label{pro-2}
For any $[\gamma] \in \Gamma / \sim$, the following assertions hold.
\begin{enumerate}
\item[i)] $[I_{[\gamma]}, I_{[\gamma]}] \subset I_{[\gamma]}$.

\item[ii)] $AI_{[\gamma]} \subset I_{[\gamma]}.$

\item[iii)] $\rho(I_{[\gamma]})(A)L \subset I_{[\gamma]}.$
\end{enumerate}
\end{proposition}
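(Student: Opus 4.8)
The plan is to verify the three closure properties by splitting each expression into contributions from $L_{0,[\gamma]}$ and from $L_{[\gamma]}$, and in each case tracking where the product lands via Lemma~\ref{lema-1} (parts ii)--v)) together with Remark~\ref{Remark}. The recurring principle is: whenever a bracket, an $A$-action, or an anchor action produces a nonzero element in some $L_{\delta}$ with $\delta \in \Gamma$, one must show $\delta \in [\gamma]$; this follows because $\delta$ is obtained from an element of $[\gamma]$ by adding an element of $\pm\Lambda \cup \pm\Gamma$, so Remark~\ref{Remark} (or a symmetric version using $-\xi \sim \xi$, which holds by definition of connection) gives $\delta \sim \gamma$. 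Elements that land in $L_0 = H$ must instead be shown to lie in $L_{0,[\gamma]}$, which is where one uses the explicit definition of $L_{0,[\gamma]}$ as a sum of $A_{-\xi}L_{\xi}$ and $[L_{-\xi},L_{\xi}]$ terms.

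For part~i), I would expand $[I_{[\gamma]},I_{[\gamma]}]$ using bilinearity into the four pieces $[L_{0,[\gamma]},L_{0,[\gamma]}]$, $[L_{0,[\gamma]},L_{[\gamma]}]$, $[L_{[\gamma]},L_{0,[\gamma]}]$, and $[L_{[\gamma]},L_{[\gamma]}]$. Since $L_{0,[\gamma]} \subset H$ and $H$ is abelian, the first piece vanishes. For $[H \cap L_{0,[\gamma]}, L_{\xi}]$ with $\xi \in [\gamma]$: the bracket lies in $L_{\xi} \subset L_{[\gamma]}$ (this is immediate from $[h,v_\xi] = \xi(h)v_\xi$), so it is absorbed. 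For $[L_{\xi},L_{\eta}]$ with $\xi,\eta \in [\gamma]$: by Lemma~\ref{lema-1}-ii) this is contained in $L_{\xi+\eta}$; if $\xi+\eta = 0$ the term lies in $\sum_{\xi\in[\gamma]}[L_{-\xi},L_{\xi}] \subset L_{0,[\gamma]}$, and if $\xi+\eta \in \Gamma$ then $\xi+\eta \sim \xi \sim \gamma$ by Remark~\ref{Remark}, so $L_{\xi+\eta} \subset L_{[\gamma]}$.

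Part~ii) splits as $A_0 I_{[\gamma]} + \sum_{\alpha\in\Lambda} A_\alpha I_{[\gamma]}$. For $A_0 L_{0,[\gamma]}$ one checks it stays in $L_0$ and in fact in $L_{0,[\gamma]}$ — here one uses that $A_0$ acting on a generator $A_{-\xi}L_\xi$ gives, via associativity of the module structure, $(A_0 A_{-\xi})L_\xi \subset A_{-\xi}L_\xi$ by Lemma~\ref{lema-1}-iii), and that $A_0$ acting on $[L_{-\xi},L_\xi] \subset H$ keeps it in $H$, after which a short argument (or the fundamental relation~\eqref{fundamental} with $v \in H$) is needed to land it back inside $L_{0,[\gamma]}$; this is the most delicate bookkeeping point. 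For $A_0 L_\xi \subset L_\xi$ and, by Lemma~\ref{lema-1}-iv), $A_\alpha L_\xi \subset L_{\alpha+\xi}$ with $\alpha+\xi$ either $0$ (giving a term in $A_{-\xi}L_\xi \subset L_{0,[\gamma]}$, noting $-\xi = \alpha \in \Lambda$) or in $\Gamma$ with $\alpha+\xi \sim \xi \sim \gamma$ by Remark~\ref{Remark}. The case $A_\alpha L_{0,[\gamma]}$ with $\alpha \neq 0$ produces weight-$\alpha$ elements of $L$, i.e. elements of $L_\alpha$ with $\alpha \in \Gamma \cap \Lambda$, and one argues $\alpha \sim \gamma$ via the connection $\{-\xi,\alpha\}$ or $\{\xi,\alpha\}$ applied to the generators.

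For part~iii), I would write $\rho(I_{[\gamma]})(A)L = \rho(L_{0,[\gamma]})(A)L + \rho(L_{[\gamma]})(A)L$. Decomposing $A = A_0 \oplus \bigoplus A_\beta$ and $L = H \oplus \bigoplus L_\eta$, and using Lemma~\ref{lema-1}-v) to see $\rho(L_\xi)(A_\beta) \subset A_{\xi+\beta}$ (and $\rho(H)(A_\beta)\subset A_\beta$, $\rho(L_\xi)(A_0)\subset A_\xi$), each term $\rho(L_\xi)(A_\beta)L_\eta$ lands in $A_{\xi+\beta}L_\eta \subset L_{\xi+\beta+\eta}$ by Lemma~\ref{lema-1}-iv); tracing the total weight $\xi+\beta+\eta$ and checking at each stage whether partial sums are $0$ or lie in $\pm\Gamma$ shows, via Remark~\ref{Remark} applied along the chain starting at $\xi \in [\gamma]$, that any nonzero-root contribution is connected to $\gamma$ and any zero contribution lies in one of the defining generators of $L_{0,[\gamma]}$; the term $\rho(L_{0,[\gamma]})(A)L$ vanishes or is absorbed similarly since $\rho(H)$ preserves weights. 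The main obstacle throughout is the third item together with the $A_0 L_{0,[\gamma]}$ subcase: one must be careful that the ideal condition~\eqref{cond_ideal} — which involves the full $L$, not just $L_{[\gamma]}$ — really does close up, and this forces one to verify that every element of $L$ hit by $\rho(I_{[\gamma]})(A)$ has a root (or lies in the span of the specified $H$-generators) connected to $[\gamma]$; once the connection chains are set up correctly via Remark~\ref{Remark}, the rest is routine verification that I would not spell out in full.
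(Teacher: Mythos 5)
Your treatment of parts i) and ii) follows the same route as the paper (split into root/weight pieces, use Lemma~\ref{lema-1} and Remark~\ref{Remark}), but the two places you flag as ``delicate'' and leave unfinished are exactly where the actual work lies, and your hints for them are off. For the subcase $A_0[L_{-\xi},L_{\xi}]$ the correct move is \eqref{fundamental} with $v\in L_{-\xi}$, $w\in L_{\xi}$, $a\in A_0$ (not ``$v\in H$''): $a[v,w]=[v,aw]-\rho(v)(a)w$, whose two terms land in $[L_{-\xi},L_{\xi}]$ and, via Lemma~\ref{lema-1}-v), in $A_{-\xi}L_{\xi}$, i.e.\ in the explicit generators of $L_{0,[\gamma]}$. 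Likewise, for $A_{\alpha}L_{0,[\gamma]}$ the set $\{\xi,\alpha\}$ is not a connection from $\xi$ to $\alpha$ (its endpoint is $\xi+\alpha$, not $\pm\alpha$); one needs $\{\xi,\alpha-\xi\}$ for the piece $A_{\alpha}(A_{-\xi}L_{\xi})\subset A_{\alpha-\xi}L_{\xi}\subset L_{\alpha}$, and for $A_{\alpha}[L_{-\xi},L_{\xi}]$ one must again rewrite with \eqref{fundamental} and then connect each resulting piece, as the paper does.

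The genuine gap is part iii). Your plan is pure weight bookkeeping: $\rho(L_{\xi})(A_{\beta})L_{\eta}\subset L_{\xi+\beta+\eta}$ and ``Remark~\ref{Remark} applied along the chain starting at $\xi$.'' That chain passes through $\xi+\beta$, which lies in $\Lambda\cup\{0\}$ but need not lie in $\pm\Gamma$, so neither Definition~\ref{connection} (intermediate sums must be in $\pm\Gamma$) nor Remark~\ref{Remark} (each step must land in $\Gamma$) applies. Concretely, take $\beta=-\xi$ and $\eta\in\Gamma$ with $\eta\notin[\gamma]$: then $\rho(L_{\xi})(A_{-\xi})L_{\eta}\subset A_0L_{\eta}\subset L_{\eta}$, and no connection chain can put this in $I_{[\gamma]}$ because $L_{\eta}\cap I_{[\gamma]}=0$. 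The correct resolution is that this term \emph{vanishes}, which you only see after rewriting with \eqref{fundamental}: $\rho(v_{\xi})(a_{-\xi})w_{\eta}=[v_{\xi},a_{-\xi}w_{\eta}]-a_{-\xi}[v_{\xi},w_{\eta}]$, and each of $a_{-\xi}w_{\eta}$, $[v_{\xi},w_{\eta}]$ must be zero, since otherwise $\eta-\xi$ or $\eta+\xi$ would lie in $\Gamma\cup\{0\}$ and force $\eta\sim\xi$. So the missing idea is the systematic use of \eqref{fundamental} (equivalently, an orthogonality argument in the spirit of Proposition~\ref{pro-9}) rather than connection chains; the paper in fact disposes of iii) in one line by writing $\rho(I_{[\gamma]})(A)L\subset[I_{[\gamma]},AL]+A[I_{[\gamma]},L]$ and invoking item ii) together with the bracket-closure of $I_{[\gamma]}$ against all of $L$.
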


\begin{proof}
i) Since $L_{0,{[\gamma]}} \subset L_0 =H$, then $[L_{0,{[\gamma]}}, L_{0,{[\gamma]}}] = 0$ and we have
\begin{equation}\label{cero1}
[I_{[\gamma]}, I_{[\gamma]}] = [L_{0,[\gamma]} \oplus L_{[\gamma]}, L_{0,[\gamma]} \oplus L_{[\gamma]}] \subset [L_{0,[\gamma]}, L_{[\gamma]}] + [L_{[\gamma]}, L_{[\gamma]}].
\end{equation}

Let us consider the first summand in Equation (\ref{cero1}). Given $\delta \in [\gamma]$ we have $[L_{0,[\gamma]}, L_{\delta}] \subset L_{\delta}$, hence $[L_{0,[\gamma]}, L_{\delta}] \subset L_{[\gamma]}$. Consider now the second summand. Given $\delta, \eta \in [\gamma]$ such that $[L_{\delta}, L_{\eta}] \neq 0$, then $[L_{\delta}, L_{\eta}] \subset L_{\delta + \eta}.$ If $\delta + \eta = 0$ we have $[L_{\delta}, L_{-\delta}] \subset L_{0,[\gamma]}.$
Suppose $\delta + \eta \in \Gamma,$ then by Remark \ref{Remark} we have $[L_{\delta}, L_{\eta}] \subset L_{\delta + \eta} \subset L_{[\gamma]}$. Hence $[I_{[\gamma]}, I_{[\gamma]}] \subset I_{[\gamma]}$.

ii) Observe that  $$AI_{[\gamma]} = \Bigl(A_0 \oplus \bigl(\bigoplus_{\alpha \in \Lambda}A_{\alpha}\bigr)\Bigr) \Bigl(\bigl(\sum\limits_{\xi \in [\gamma], -\xi \in \Lambda}A_{-\xi}L_{\xi}\bigr) + \bigl(\sum\limits_{\xi \in [\gamma]}[L_{-\xi}, L_{\xi}]\bigr) \oplus \bigoplus\limits_{\xi \in [\gamma]}L_{\xi}\Bigr).$$ We have to consider six cases:

$\bullet$ As $L$ is an $A$-module, for $\xi \in [\gamma]$ and $-\xi \in \Lambda$ we get $A_0(A_{-\xi}L_{\xi}) = (A_0A_{-\xi})L_{\xi} \subset A_{-\xi}L_{\xi} \subset L_{0,[\gamma]},$ using Lemma \ref{lema-1}-iii). That is,
\begin{equation}\label{1}
A_0(A_{-\xi}L_{\xi}) \subset L_{0,[\gamma]}.
\end{equation}

$\bullet$ For $\xi \in [\gamma]$, we have $A_0[L_{-\xi},L_{\xi}] \subset [L_{-\xi},A_0L_{\xi}] + \rho(L_{-\xi})(A_0)L_{\xi}$ by Equation \eqref{fundamental}. Since $A_0L_{\xi} \subset L_{\xi}$ we get $[L_{-\xi},A_0L_{\xi}] \subset [L_{-\xi},L_{\xi}]$. Also, by Lemma \ref{lema-1}-v) we obtain $\rho(L_{-\xi})(A_0) \subset A_{-\xi}$. If $A_{-\xi} \neq 0$ (otherwise is trivial), $-\xi \in \Lambda$ therefore $\rho(L_{-\xi})(A_0)L_{\xi} \subset A_{-\xi}L_{\xi}$ with $\xi \in [\gamma]$ and $-\xi \in \Lambda$. From here,
\begin{equation}\label{2}
A_0[L_{-\xi},L_{\xi}] \subset L_{0,[\gamma]}.
\end{equation}

$\bullet$ For $\xi \in [\gamma],$ from Lemma \ref{lema-1}-iv) it follows
\begin{equation}\label{3}
A_0L_{\xi} \subset L_{\xi} \subset L_{[\gamma]}.
\end{equation}

$\bullet$ For $\alpha \in \Lambda, \xi \in [\gamma]$ and $-\xi \in \Lambda,$ since $L$ is an $A$-module we get $A_{\alpha}(A_{-\xi}L_{\xi}) \subset (A_{\alpha}A_{-\xi})L_{\xi} \subset A_{\alpha-\xi}L_{\xi} \subset L_{\alpha}$, by Lemma \ref{lema-1} if $\alpha - \xi \in \Lambda$ (otherwise is trivial). If $L_{\alpha} \neq 0$ (otherwise is trivial) then $\alpha \in \Gamma$, and by Remark \ref{Remark} $\alpha \in [\gamma],$ that is,
\begin{equation}\label{4}
A_{\alpha}(A_{-\xi}L_{\xi}) \subset L_{[\gamma]}.
\end{equation}

$\bullet$ For $\alpha \in \Lambda, \xi \in [\gamma]$ we obtain $A_{\alpha}[L_{-\xi},L_{\xi}] \subset [L_{-\xi},A_{\alpha}L_{\xi}] + \rho(L_{-\xi})(A_{\alpha})L_{\xi}$. By Lemma \ref{lema-1}-iv) $A_{\alpha}L_{\xi} \subset L_{\alpha+\xi}.$ If $L_{\alpha+\xi}\neq 0$ (otherwise is trivial), $\alpha+\xi \in \Gamma,$ we get $[L_{-\xi},A_{\alpha}L_{\xi}] \subset [L_{-\xi},L_{\alpha+\xi}] \subset L_{\alpha}.$ By Remark \ref{Remark}, $\alpha + \xi \in [\gamma].$  If $\alpha \in \Gamma$ then $\alpha \sim \alpha+\xi,$ it follows $\alpha \in [\gamma]$. Also, by Lemma \ref{lema-1}-v) we have $\rho(L_{-\xi})(A_{\alpha}) \subset A_{-\xi + \alpha}$ and similarly to the  previous case $\rho(L_{-\xi})(A_{\alpha})L_{\xi} \subset L_{\alpha}$ with $\alpha \in [\gamma]$. We get
\begin{equation}\label{5}
A_{\alpha}[L_{-\xi},L_{\xi}] \subset L_{[\gamma]}.
\end{equation}

$\bullet$ For $\alpha \in \Lambda, \xi \in [\gamma]$ we obtain $A_{\alpha}L_{\xi} \subset L_{\xi+\alpha}$. Using again Remark \ref{Remark} we can prove $\xi+\alpha \in [\gamma],$ meaning that
\begin{equation}\label{6}
A_{\alpha}L_{\xi} \subset L_{[\gamma]}.
\end{equation}
From Equations \eqref{1}-\eqref{6}, assertion ii) is proved.

iii) By Equation \eqref{fundamental} and item ii) we get $$\rho(I_{[\gamma]})(A)L \subset [I_{[\gamma]},AL] + A[I_{[\gamma]},L] \subset I_{[\gamma]}.$$
\end{proof}

\begin{proposition}\label{pro-9}
Let  $[\gamma], [\delta] \in \Gamma / \sim$ with $[\gamma] \neq [\delta]$. Then $[I_{[\gamma]}, I_{[{\delta}]}] = 0$.
\end{proposition}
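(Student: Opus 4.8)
The plan is to decompose the bracket along the direct sums $I_{[\gamma]} = L_{0,[\gamma]} \oplus L_{[\gamma]}$ and $I_{[\delta]} = L_{0,[\delta]} \oplus L_{[\delta]}$, so that
$$[I_{[\gamma]}, I_{[\delta]}] \subset [L_{0,[\gamma]}, L_{0,[\delta]}] + [L_{0,[\gamma]}, L_{[\delta]}] + [L_{[\gamma]}, L_{0,[\delta]}] + [L_{[\gamma]}, L_{[\delta]}],$$
and then to show that each of the four summands vanishes. The first one is immediate: by Lemma~\ref{lema-1}-i) both $L_{0,[\gamma]}$ and $L_{0,[\delta]}$ lie in $L_0 = H$, which is abelian.

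For the remaining three I would first isolate two ``separation'' facts valid for any $\xi \in [\gamma]$ and $\eta \in [\delta]$ (recall $[\gamma] \neq [\delta]$, and note that $-\xi \in [\gamma]$, $-\eta \in [\delta]$ as well since $-\xi = (-1)\xi$): \emph{(a)} $[L_\xi, L_\eta] = 0$; and \emph{(b)} $A_{\eta - \xi} L_\xi = 0$. For (a): if $[L_\xi, L_\eta] \neq 0$, Lemma~\ref{lema-1}-ii) gives $\xi + \eta \in \Gamma \cup \{0\}$; if $\xi + \eta = 0$ then $\eta = -\xi \in [\gamma]$, while if $\xi + \eta \in \Gamma$ then $\{\xi, \eta\}$ is a connection from $\xi$ to $\xi + \eta$ and $\{\eta, \xi\}$ one from $\eta$ to $\xi + \eta$, so $\xi + \eta \in [\gamma] \cap [\delta]$; either way $[\gamma] = [\delta]$, a contradiction. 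For (b): if $\eta - \xi = 0$ then $\eta = \xi$ and again $[\gamma] = [\delta]$; otherwise, if $A_{\eta - \xi} L_\xi \neq 0$ then $\eta - \xi \in \Lambda \subset \pm\Lambda \cup \pm\Gamma$, so $\{\xi, \eta - \xi\}$ is a connection from $\xi$ to $\xi + (\eta - \xi) = \eta$, forcing once more $[\gamma] = [\delta]$.

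With (a) and (b) in hand I would close the three nontrivial summands as follows. For $[L_{[\gamma]}, L_{[\delta]}]$, bilinearity reduces matters to $[L_\xi, L_\eta]$ with $\xi \in [\gamma]$, $\eta \in [\delta]$, which is $0$ by (a). For $[L_{0,[\gamma]}, L_{[\delta]}]$, expand $L_{0,[\gamma]}$ into its two generating types. A generator $[v_{-\xi}, v_\xi]$, $\xi \in [\gamma]$, against $v_\eta \in L_\eta$, $\eta \in [\delta]$: the Jacobi identity rewrites $[[v_{-\xi}, v_\xi], v_\eta]$ in terms of $[L_{-\xi}, L_\eta]$ and $[L_\xi, L_\eta]$, both $0$ by (a). A generator $a_{-\xi} v_\xi \in A_{-\xi} L_\xi$, $\xi \in [\gamma]$, $-\xi \in \Lambda$, against $v_\eta$: by \eqref{fundamental},
$$[v_\eta, a_{-\xi} v_\xi] = a_{-\xi}[v_\eta, v_\xi] + \rho(v_\eta)(a_{-\xi}) v_\xi,$$
where the first term is $0$ by (a), and for the second Lemma~\ref{lema-1}-v) gives $\rho(v_\eta)(a_{-\xi}) \in A_{\eta - \xi}$, so $\rho(v_\eta)(a_{-\xi}) v_\xi \in A_{\eta - \xi} L_\xi = 0$ by (b). Hence $[L_{0,[\gamma]}, L_{[\delta]}] = 0$; by antisymmetry of the bracket and the symmetry of the argument in $[\gamma]$ and $[\delta]$, also $[L_{[\gamma]}, L_{0,[\delta]}] = 0$. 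Summing the four contributions gives $[I_{[\gamma]}, I_{[\delta]}] = 0$.

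The only genuinely delicate point is fact (b) together with its use in the $A_{-\xi}L_\xi$-versus-$L_\eta$ case: this is exactly where a weight, rather than a root, has to act as a link in a connection, so it is the step that really uses $\pm\Lambda$ in Definition~\ref{connection}; everything else is routine weight/root bookkeeping plus a Jacobi expansion. I would also pay attention to the degenerate sub-cases $\eta = \pm\xi$ and $\eta - \xi = 0$, since those are precisely the ones that silently collapse the equality of classes.
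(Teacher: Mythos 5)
Your proof is correct and follows essentially the same route as the paper's: the same decomposition of $[I_{[\gamma]},I_{[\delta]}]$, the same connection argument (via $\{\xi,\eta\}$, i.e.\ Remark~\ref{Remark}) to kill $[L_{[\gamma]},L_{[\delta]}]$, the Jacobi identity for the $[L_{-\xi},L_\xi]$ generators, and Equation~\eqref{fundamental} with Lemma~\ref{lema-1}-v) plus the two-step connection $\{\xi,\eta-\xi\}$ through a weight for the $A_{-\xi}L_\xi$ generators. Your statements (a) and (b) just isolate explicitly what the paper argues inline, including the degenerate cases $\eta=\pm\xi$.
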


\begin{proof}
We have
\begin{equation}\label{cuatro1}
[I_{[\gamma]}, I_{[{\delta}]}] = [L_{0,[\gamma]} \oplus L_{[\gamma]}, L_{0,[\delta]} \oplus L_{[\delta]}] \subset [L_{0,[\gamma]} L_{[\delta]}] + [L_{[\gamma]}, L_{0,[\delta]}] +[L_{[\gamma]}, L_{[\delta]}].
\end{equation}

Consider the above third summand $[L_{[\gamma]}, L_{[\delta]}]$ and suppose there exist $\gamma_1 \in [\gamma]$ and $\delta_1 \in [\delta]$ such that $[L_{\gamma_1}, L_{\delta_1}]\neq 0$. As necessarily $\gamma_1 \neq -\delta_1$, then $\gamma_1 + \delta_1 \in \Gamma$. Since $\gamma \sim \gamma_1$ and $\gamma_1+\delta_1 \in \Gamma,$ by Remark \ref{Remark} we conclude $\gamma \sim \gamma_1 + \delta_1.$ Similarly we can prove $\delta \sim \gamma_1+\delta_1,$ so we conclude $\gamma \sim \delta,$ a contradiction. Hence $[L_{\gamma_1}, L_{\delta_1}] = 0$ and so
\begin{equation}\label{nueve1}
[L_{[\gamma]}, L_{[\delta]}] = 0.
\end{equation}

Consider now the first summand in Equation \eqref{cuatro1}, $$[L_{0,[\gamma]}, L_{[\delta]}] = \Bigl[\bigl(\sum_{\gamma_1 \in [\gamma], -\gamma_1 \in \Lambda}A_{-\gamma_1}L_{\gamma_1}\bigr) + \bigl(\sum_{\gamma_1 \in [\gamma]}[L_{-\gamma_1}, L_{\gamma_1}]\bigr),L_{[\delta]}\Bigr].$$

$\bullet$ For $\delta_1 \in [\delta]$  we obtain by Jacobi identity that $$[[L_{-\gamma_1},L_{\gamma_1}],L_{\delta_1}] = [[L_{\gamma_1},L_{\delta_1}],L_{-\gamma_1}] + [[L_{\delta_1},L_{-\gamma_1}],L_{\gamma_1}]$$ and by Equation \eqref{nueve1} that $$[L_{\gamma_1},L_{\delta_1}] = [L_{\delta_1},L_{-\gamma_1}] = 0.$$ Hence  $[[L_{-\gamma_1},L_{\gamma_1}],L_{\delta_1}] = 0$.

$\bullet$ If there exists $\delta_1 \in [\delta]$ such that $$0 \neq [A_{-\gamma_1}L_{\gamma_1},L_{\delta_1}] = [L_{\delta_1},A_{-\gamma_1}L_{\gamma_1}] \subset A_{-\gamma_1}[L_{\delta_1},L_{\gamma_1}] + \rho(L_{\delta_1})(A_{-\gamma_1})L_{\gamma_1},$$  we have $[L_{\delta_1},L_{\gamma_1}] = 0$ by Equation \eqref{nueve1}. Therefore $0 \neq \rho(L_{\delta_1})(A_{-\gamma_1})L_{\gamma_1} \subset A_{\delta_1-\gamma_1}L_{\gamma_1},$ and so $A_{\delta_1-\gamma_1}$ is nonzero. Since $\delta_1 - \gamma_1 \neq 0$ we have $\delta_1-\gamma_1 \in \Lambda,$ then the connection $\{\gamma_1,\delta_1-\gamma_1\}$ gives $\gamma_1 \sim \delta_1$, a contradiction. Consequently $$\rho(L_{\delta_1})(A_{-\gamma_1})L_{\gamma_1} = 0$$ and we have showed
\begin{equation}\label{nueve2}
[L_{0,[\gamma]},L_{\delta}] = 0.
\end{equation}
In a similar way, we get $[L_{[\gamma]},L_{0,[\delta]}] = 0.$ From Equations \eqref{cuatro1}-\eqref{nueve2}, we conclude $[I_{[\gamma]}, I_{[\delta]}] = 0.$
\end{proof}

\begin{theorem}\label{teo-1}
The following assertions hold.
\begin{enumerate}
\item[{\rm i)}] For any $[\gamma] \in \Gamma/ \sim$, the linear space $I_{[\gamma]} = L_{0,[\gamma]} \oplus L_{[\gamma]}$ associated to $[\gamma]$ is an ideal of $L$.

\item[{\rm ii)}] If $L$ is simple then all the roots of $\Gamma$ are connected. Moreover, $$H = \bigl(\sum\limits_{\gamma \in \Gamma, -\gamma \in \Lambda}A_{-\gamma}L_{\gamma}\bigr) + \bigl(\sum\limits_{\gamma \in \Gamma}[L_{-\gamma}, L_{\gamma}]\bigr).$$
\end{enumerate}
\end{theorem}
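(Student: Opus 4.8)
The plan is to prove the two parts separately, with part i) being essentially a packaging of the three preceding propositions and part ii) being the substantive argument.

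For part i), I would argue as follows. Proposition \ref{pro-2} i) gives $[I_{[\gamma]},I_{[\gamma]}]\subset I_{[\gamma]}$ and Proposition \ref{pro-2} ii) gives $AI_{[\gamma]}\subset I_{[\gamma]}$, while Proposition \ref{pro-9} gives $[I_{[\gamma]},I_{[\delta]}]=0$ for $[\delta]\neq[\gamma]$. Since $L = L_0 \oplus \bigoplus_{\delta}L_\delta$ and each $L_\delta$ with $\delta\in\Gamma$ lies in exactly one $I_{[\delta]}$, and $L_0 = H$ is abelian so $[H, I_{[\gamma]}]\subset I_{[\gamma]}$ (each $[H,L_\xi]\subset L_\xi$ for $\xi\in[\gamma]$, and $[H,L_{0,[\gamma]}]=0$), combining these shows $[L,I_{[\gamma]}]\subset I_{[\gamma]}$, i.e. $I_{[\gamma]}$ is a Lie ideal of $L$. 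Then $AI_{[\gamma]}\subset I_{[\gamma]}$ is Proposition \ref{pro-2} ii), and the remaining ideal condition $\rho(I_{[\gamma]})(A)L\subset I_{[\gamma]}$ is exactly Proposition \ref{pro-2} iii). So $I_{[\gamma]}$ is an ideal in the Lie-Rinehart sense. This part is routine bookkeeping.

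For part ii), suppose $L$ is simple. Pick any $\gamma\in\Gamma$ and form $I_{[\gamma]}$, which by part i) is an ideal of $L$. The definition of simplicity says the only ideals are $\{0\}$, $L$, and $\mathrm{Ker}\,\rho$. Since $L_\gamma\subset I_{[\gamma]}$ and $\gamma\neq 0$ forces $L_\gamma\neq 0$, we have $I_{[\gamma]}\neq\{0\}$. The main point is to rule out $I_{[\gamma]}=\mathrm{Ker}\,\rho$ being a proper nontrivial possibility; I would handle this by observing that if $I_{[\gamma]}$ were contained in a proper ideal, then the complementary roots would have to be "disconnected" from $[\gamma]$, and push through the argument that $I_{[\gamma]}=L$. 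Concretely: the decomposition $L = \sum_{[\delta]\in\Gamma/\sim} I_{[\delta]}$ together with $[I_{[\gamma]},I_{[\delta]}]=0$ for $[\gamma]\neq[\delta]$ and the fact that $AI_{[\delta]}\subset I_{[\delta]}$ exhibits each $I_{[\delta]}$ as an ideal whose "interaction" with the rest is as trivial as possible; since $[L,L]\neq 0$, $AL\neq 0$ and the only ideals are $\{0\}, L, \mathrm{Ker}\,\rho$, and since $H=L_0\subset \sum_{[\delta]} L_{0,[\delta]}$, one forces $\Gamma/\sim$ to be a single class, hence $I_{[\gamma]}=L$. From $I_{[\gamma]}=L$ and $L_0=H$ we read off the displayed formula $H = L_{0,[\gamma]} = \bigl(\sum_{\gamma\in\Gamma,-\gamma\in\Lambda}A_{-\gamma}L_{\gamma}\bigr) + \bigl(\sum_{\gamma\in\Gamma}[L_{-\gamma},L_{\gamma}]\bigr)$ by comparing the $L_0$-components.

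The step I expect to be the main obstacle is the interplay with $\mathrm{Ker}\,\rho$ in the simplicity hypothesis: unlike the classical simple Lie algebra case where ideals are only $\{0\}$ and $L$, here $\mathrm{Ker}\,\rho$ is explicitly allowed, so I must argue that $I_{[\gamma]}$, which contains the nonzero root space $L_\gamma$ acting nontrivially, cannot equal $\mathrm{Ker}\,\rho$ unless that forces a contradiction with $AL\neq 0$ or $[L,L]\neq 0$ — or else separately show that even if some $I_{[\delta]}\subset\mathrm{Ker}\,\rho$, the direct-sum-of-commuting-ideals structure still collapses to a single class. I would resolve this by noting that $\mathrm{Ker}\,\rho$ being an ideal and $L/\mathrm{Ker}\,\rho$ carrying the induced structure, combined with $\sum I_{[\delta]}$ exhausting $L$ and the pairwise commuting/annihilating relations, leaves no room for more than one connection class once $[L,L]\neq0$ and $AL\neq 0$ are imposed; a careful case analysis on whether $I_{[\gamma]}=L$ or $I_{[\gamma]}\subsetneq L$ (the latter giving a nontrivial ideal decomposition incompatible with simplicity) finishes it.
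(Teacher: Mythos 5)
Your part i) is correct and is essentially the paper's own argument: since $H$ is abelian and $[H,L_{\xi}]\subset L_{\xi}$, one gets $[I_{[\gamma]},H]\subset L_{[\gamma]}\subset I_{[\gamma]}$, while Proposition \ref{pro-2}-i) and Proposition \ref{pro-9} give $[I_{[\gamma]},L_{\delta}]\subset I_{[\gamma]}$ for every root $\delta$, and Proposition \ref{pro-2}-ii), iii) supply the $A$-module condition and the anchor condition $\rho(I_{[\gamma]})(A)L\subset I_{[\gamma]}$. No issues there.

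Part ii), however, has a genuine gap exactly at the point you flagged. Your plan is to exclude the alternative $I_{[\gamma]}=\mathrm{Ker}\,\rho$ and force $I_{[\gamma]}=L$, but the two steps you lean on do not hold. First, the inclusion $H=L_0\subset\sum_{[\delta]}L_{0,[\delta]}$ is not available: Theorem \ref{teo-2} only gives $L=U+\sum_{[\delta]}I_{[\delta]}$ with $U$ a linear complement of $\sum_{[\delta]}L_{0,[\delta]}$ in $H$, and the vanishing of $U$ is precisely (part of) the displayed formula for $H$ that you are trying to prove, so invoking it is circular. Second, the claim that $I_{[\gamma]}\subsetneq L$ would give ``a nontrivial ideal decomposition incompatible with simplicity'' is false as stated, because by the paper's definition $\mathrm{Ker}\,\rho$ is explicitly allowed to be a proper nonzero ideal of a simple Lie--Rinehart algebra; nothing in your sketch rules out $I_{[\gamma]}=\mathrm{Ker}\,\rho\subsetneq L$, and it need not be ruled out. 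The paper resolves this case with a short observation that is missing from your proposal: if $I_{[\gamma]}=\mathrm{Ker}\,\rho$ for every $\gamma\in\Gamma$, then all these ideals coincide as subspaces, so for any $\xi\in\Gamma$ one has $L_{\xi}\subset I_{[\gamma]}=L_{0,[\gamma]}\oplus L_{[\gamma]}$ with $L_{0,[\gamma]}\subset H$, and the root space decomposition then forces $\xi\in[\gamma]$; hence all roots are connected in this case as well, without ever asserting $I_{[\gamma]}=L$. In the remaining case, where some $I_{[\gamma]}=L$, comparing $L_0$-components yields $H=L_{0,[\gamma]}$, which is how the displayed identity for $H$ is read off; since your derivation of that identity rests solely on having $I_{[\gamma]}=L$, it inherits the same gap. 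In short: part i) stands, but part ii) needs the case $I_{[\gamma]}=\mathrm{Ker}\,\rho$ treated by the coincidence-of-ideals argument rather than eliminated.
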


\begin{proof}
i) Since $H$ is abelian, $[I_{[\gamma]},H] \subset L_{[\gamma]} \subset I_{[\gamma]}$ and by Proposition \ref{pro-2}-i) and Proposition \ref{pro-9} we have $$[I_{[\gamma]}, L] = \bigl[I_{[\gamma]}, H \oplus (\bigoplus\limits_{\xi \in [\gamma]}L_{\xi}) \oplus (\bigoplus\limits_{\delta \notin [\gamma]}L_{\delta})\bigr] \subset I_{[\gamma]},$$ so $I_{[\gamma]}$ is a Lie ideal of $L.$ Clearly by Proposition \ref{pro-2}-ii) we also have that  $I_{[\gamma]}$ is an $A$-module. Finally, by Proposition \ref{pro-2}-iii) we conclude $I_{[\gamma]}$ is an ideal of $L.$

\medskip

ii) The simplicity of $L$ implies $I_{[\gamma]} \in \{{\rm Ker}\rho,L\}$ for any $\gamma \in \Gamma.$ If some $\gamma \in \Gamma$ is such that $I_{[\gamma]} = L,$ then $[\gamma] = \Gamma.$ Otherwise, if $I_{[\gamma]} = {\rm Ker}\rho$ for all $\gamma \in \Gamma$ then $[\gamma] = [\xi]$ for any $\gamma, \xi \in \Gamma$ and again $[\gamma] = \Gamma.$ Therefore in any case $L$ has all its nonzero roots connected and $H = \bigl(\sum_{\gamma \in \Gamma, -\gamma \in \Lambda}A_{-\gamma}L_{\gamma}\bigr) + \bigl(\sum_{\gamma \in \Gamma}[L_{-\gamma}, L_{\gamma}]\bigr).$
\end{proof}

\begin{theorem}\label{teo-2}
Let $(L,A)$ be a split Lie-Rinehart algebra. Then $$L = U + \sum\limits_{[\gamma] \in \Gamma/\sim}I_{[\gamma]},$$ where $U$ is a linear complement in $H$ of $\bigl(\sum_{\gamma \in \Gamma, -\gamma \in \Lambda}A_{-\gamma}L_{\gamma}\bigr) + \bigl(\sum_{\gamma \in \Gamma}[L_{-\gamma}, L_{\gamma}]\bigr)$ and any $I_{[\gamma]}$ is one of the ideals of $L$ described in Theorem \ref{teo-1}-i). Furthermore, $[I_{[\gamma]}, I_{[\delta]}] = 0$ when $[\gamma] \neq [\delta].$
\end{theorem}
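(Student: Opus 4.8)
The plan is to prove that $L$ decomposes as claimed by starting from the root-space decomposition $L = H \oplus (\bigoplus_{\gamma \in \Gamma} L_\gamma)$ and partitioning the nonzero roots according to the equivalence relation $\sim$ of Proposition~\ref{pro1}. First I would observe that, by definition of the subspaces $L_{[\gamma]}$ and the fact that every nonzero root lies in exactly one class $[\gamma]$, we have $\bigoplus_{\gamma \in \Gamma} L_\gamma = \sum_{[\gamma] \in \Gamma/\sim} L_{[\gamma]}$, and since the classes are disjoint this sum is in fact direct. Next I would handle the zero-weight part $H$: by definition each $L_{0,[\gamma]}$ sits inside $H$, and the sum $\sum_{[\gamma]} L_{0,[\gamma]}$ equals precisely $\bigl(\sum_{\gamma \in \Gamma,\, -\gamma \in \Lambda} A_{-\gamma}L_\gamma\bigr) + \bigl(\sum_{\gamma \in \Gamma}[L_{-\gamma},L_\gamma]\bigr)$, because every summand on the right involves a single root $\gamma$ and hence lies in $L_{0,[\gamma]}$ for the class of that root. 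Choosing $U$ to be any linear complement in $H$ of this subspace, we obtain $H = U + \sum_{[\gamma]} L_{0,[\gamma]}$, and combining with the decomposition of the nonzero-weight part yields
$$L = H \oplus \bigl(\bigoplus_{\gamma \in \Gamma} L_\gamma\bigr) = U + \sum_{[\gamma] \in \Gamma/\sim}\bigl(L_{0,[\gamma]} \oplus L_{[\gamma]}\bigr) = U + \sum_{[\gamma] \in \Gamma/\sim} I_{[\gamma]},$$
as desired.

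The remaining assertions are then essentially quotations of earlier results. That each $I_{[\gamma]}$ is an ideal of $L$ is exactly Theorem~\ref{teo-1}-i), and the orthogonality relation $[I_{[\gamma]}, I_{[\delta]}] = 0$ for $[\gamma] \neq [\delta]$ is exactly Proposition~\ref{pro-9}. So once the spanning decomposition is established, there is nothing further to do beyond invoking these two statements.

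The main (mild) obstacle is bookkeeping rather than anything deep: one must be careful that the generating sets used to build $L_{0,[\gamma]}$ really do exhaust the chosen subspace of $H$ and nothing more, i.e. that $\sum_{[\gamma]} L_{0,[\gamma]}$ is neither too small nor too large relative to $\bigl(\sum_{\gamma}A_{-\gamma}L_\gamma\bigr) + \bigl(\sum_\gamma [L_{-\gamma},L_\gamma]\bigr)$. This is clear from the fact that the index $\gamma$ ranging over all of $\Gamma$ is the disjoint union of the indices ranging over the individual classes $[\gamma]$, together with the observation (used already in the proof of Proposition~\ref{pro-2}-i)) that $A_{-\gamma}L_\gamma \subset L_{0,[\gamma]}$ and $[L_{-\gamma},L_\gamma] \subset L_{0,[\gamma]}$ whenever the relevant products are nonzero. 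Note also that the sum $U + \sum_{[\gamma]} I_{[\gamma]}$ need not be direct, since a priori different $L_{0,[\gamma]}$ may overlap inside $H$; this is why the statement is phrased with ``$+$'' rather than ``$\oplus$'', and no further argument is required on that point.
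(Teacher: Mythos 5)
Your proposal is correct and follows essentially the same route as the paper: the paper's proof simply declares the decomposition $L = H \oplus (\bigoplus_{\gamma \in \Gamma} L_{\gamma}) = U + \sum_{[\gamma]} I_{[\gamma]}$ to be clear and then cites Theorem \ref{teo-1}-i) for the ideal property and Proposition \ref{pro-9} for the orthogonality, exactly as you do. Your only addition is to spell out the bookkeeping that $\sum_{[\gamma]} L_{0,[\gamma]}$ exhausts $\bigl(\sum_{\gamma \in \Gamma, -\gamma \in \Lambda} A_{-\gamma}L_{\gamma}\bigr) + \bigl(\sum_{\gamma \in \Gamma} [L_{-\gamma},L_{\gamma}]\bigr)$ because the classes partition $\Gamma$, which is a harmless (and correct) elaboration of what the paper leaves implicit.
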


\begin{proof}
We have $I_{[\gamma]}$ is well defined and, by Theorem \ref{teo-1}-i), an ideal of $L$, being clear that $$L = H \oplus (\bigoplus\limits_{\gamma \in \Gamma}L_{\gamma}) = U + \sum\limits_{[\gamma] \in \Gamma/\sim}I_{[\gamma]}.$$ Finally, Proposition \ref{pro-9} gives $[I_{[\gamma]}, I_{[\delta]}]=0$ if $[\gamma] \neq [\delta].$
\end{proof}

\noindent For a Lie-Rinehart algebra $L,$ we denote by ${\mathcal Z}(L) := \{v \in L : [v, L] = 0 \hbox{  and  } \rho(v) = 0\}$ the {\it center} of $L$.

\begin{corollary}\label{coro-1}
If ${\mathcal Z}(L) = 0$ and $H = \bigl(\sum_{\gamma \in \Gamma, -\gamma \in \Lambda}A_{-\gamma}L_{\gamma}\bigr) + \bigl(\sum_{\gamma \in \Gamma}[L_{-\gamma}, L_{\gamma}]\bigr)$ then $L$ is the direct sum of the ideals given in Theorem \ref{teo-1},
$$L = \bigoplus\limits_{[\gamma] \in \Gamma/\sim}I_{[\gamma]}.$$ Moreover, $[I_{[\gamma]}, I_{[\delta]}] = 0$ when $[\gamma] \neq [\delta].$
\end{corollary}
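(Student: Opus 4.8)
The plan is to upgrade the sum decomposition of Theorem~\ref{teo-2} into a \emph{direct} sum under the stated hypotheses. First I would argue that the complement $U$ vanishes: by hypothesis $H = \bigl(\sum_{\gamma \in \Gamma, -\gamma \in \Lambda}A_{-\gamma}L_{\gamma}\bigr) + \bigl(\sum_{\gamma \in \Gamma}[L_{-\gamma}, L_{\gamma}]\bigr)$, so a linear complement $U$ of this subspace in $H$ is necessarily $\{0\}$. Hence Theorem~\ref{teo-2} already gives $L = \sum_{[\gamma] \in \Gamma/\sim} I_{[\gamma]}$, and it only remains to check that this sum is direct.

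To prove directness, I would take an element $x \in I_{[\gamma]} \cap \bigl(\sum_{[\delta] \neq [\gamma]} I_{[\delta]}\bigr)$ and show $x = 0$. The natural route is to use the center. From Theorem~\ref{teo-1}-i) every $I_{[\delta]}$ is an ideal, and Proposition~\ref{pro-9} gives $[I_{[\gamma]}, I_{[\delta]}] = 0$ whenever $[\gamma] \neq [\delta]$. Therefore $x \in I_{[\gamma]}$ satisfies $[x, I_{[\delta]}] = 0$ for all $[\delta] \neq [\gamma]$, while $x$, being also in $\sum_{[\delta] \neq [\gamma]} I_{[\delta]}$, commutes with $I_{[\gamma]}$ as well; combining these, $[x, L] = [x, \sum_{[\delta]} I_{[\delta]}] = 0$ (here one also uses that $L = \sum I_{[\delta]}$, already established, so that $U = 0$ leaves no stray summand).

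It then remains to see $\rho(x) = 0$ so that $x \in \mathcal{Z}(L) = 0$. For this I would use condition~\eqref{cond_ideal} for ideals together with the vanishing brackets: for any $[\delta] \neq [\gamma]$ and any $a \in A$, $w \in L$ one has $\rho(x)(a)w \in [x, aw] + A[x,w] \subset I_{[\gamma]}$ by Equation~\eqref{fundamental}, but symmetrically, viewing $x \in \sum_{[\delta]\neq[\gamma]} I_{[\delta]}$ and using that this is a sum of ideals, $\rho(x)(a)w$ also lies in $\sum_{[\delta] \neq [\gamma]} I_{[\delta]}$. One shows more precisely that $\rho(x)$ acts trivially: since $[x, L] = 0$, Equation~\eqref{fundamental} gives $\rho(x)(a)w = [x, aw] - a[x,w] = 0$ for all $a \in A$, $w \in L$, hence $\rho(x)(a)w = 0$ for all $w$; choosing $w$ appropriately (or using that $AL$ together with the root space description spans enough of $L$, or directly that $\rho(x)(a)$ annihilates all of $L$ as an operator forces $\rho(x)(a) = 0$ when $L$ has no such annihilators — here one may instead simply note $\rho$ is an $A$-module and Lie homomorphism and that $\rho(x)(a)w = 0$ for all $w$ means $\rho(x)(a) \in \ann_{\mathrm{Der}(A)}$-type obstruction) one concludes $\rho(x) = 0$. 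Thus $x \in \mathcal{Z}(L) = \{0\}$, giving directness, and the final clause $[I_{[\gamma]}, I_{[\delta]}] = 0$ for $[\gamma] \neq [\delta]$ is just a restatement of Proposition~\ref{pro-9}.

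The main obstacle I anticipate is the step showing $\rho(x) = 0$: the argument $[x,L]=0 \Rightarrow \rho(x)(a)w = [x,aw]-a[x,w]=0$ is clean and shows $\rho(x)(a)$ kills every $w \in L$, but one must then legitimately pass from ``$\rho(x)(a)w = 0$ for all $w \in L$'' to ``$\rho(x)(a) = 0$ as a derivation of $A$''. This does not follow formally unless one knows something extra; the correct move is to observe that $\rho(x)(a) w = 0$ for all $w$ together with $x$ commuting with all of $L$ already places $x$ in $\{v : [v,L]=0 \text{ and } \rho(v)=0\}$ once we verify $\rho(x)=0$ by a separate short computation using that $\rho$ is a homomorphism of $A$-modules and that $A = A_0 \oplus (\bigoplus_\alpha A_\alpha)$ with the weight-space compatibility of Lemma~\ref{lema-1}-v) — so the functional $\gamma$ of $x$'s grading controls $\rho(x)(A_\alpha) \subset A_{\gamma+\alpha}$, and cross-class vanishing of the induced actions forces $\rho(x)$ to be zero. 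Getting this last bookkeeping exactly right, rather than the bracket computations, is where care is needed.
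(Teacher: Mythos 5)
Your proposal follows essentially the same route as the paper's proof: the hypothesis on $H$ forces the complement $U$ of Theorem \ref{teo-2} to vanish, an element $x$ of $I_{[\gamma]} \cap \sum_{[\delta]\neq[\gamma]} I_{[\delta]}$ satisfies $[x,L]=0$ by Proposition \ref{pro-9}, then Equation \eqref{fundamental} together with ${\mathcal Z}(L)=0$ gives $x=0$, and the last clause is just Proposition \ref{pro-9} again. The step you flag as delicate --- passing from $\rho(x)(a)w=0$ for all $a\in A$, $w\in L$ to $\rho(x)=0$ --- is precisely where the paper is equally terse (it simply states that Equation \eqref{fundamental} allows one to conclude $\rho(v)=0$), so your proposed extra weight-space bookkeeping is not part of, nor needed to reproduce, the paper's argument; your observation that this inference implicitly uses that no nonzero element of $A$ annihilates all of $L$ is a fair remark about the paper itself rather than a defect of your proof relative to it.
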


\begin{proof}
Since $H = \bigl(\sum\limits_{\gamma \in \Gamma, -\gamma \in \Lambda}A_{-\gamma}L_{\gamma}\bigr) + \bigl(\sum\limits_{\gamma \in \Gamma}[L_{-\gamma}, L_{\gamma}]\bigr)$ we get $$L = \sum\limits_{[\gamma] \in \Gamma/\sim} I_{[\gamma]}.$$
To verify the direct character of the sum, take some $v \in I_{[\gamma]} \cap \bigl(\sum_{[\delta]\in \Gamma/ \sim, [\delta] \neq [\gamma]}I_{[\delta]}\bigr)$. Since $v \in I_{[\gamma]},$ the fact $\bigl[I_{[\gamma]},I_{[\delta]}\bigr] = 0$ when $[\gamma] \neq [\delta]$ gives us $$[v,\sum_{[\delta] \in \Gamma/ \sim, [\delta] \neq [\gamma]}I_{[\delta]}] = 0.$$

\noindent In a similar way, since $v \in \sum_{[\delta]\in \Gamma/ \sim, [\delta] \neq [\gamma]}I_{[\delta]}$ we get $[v,I_{[\gamma]}] = 0.$ Therefore $[v,L]=0.$ Now,  Equation \eqref{fundamental} allows us to  conclude $\rho(v) = 0$. That is, $v \in {\mathcal Z}(L)$ and so $v=0$.
\end{proof}

\section{Connections in the weights system of $A$. Decompositions of $A$}

We begin this section by introducing an adequate notion of  connection among   the weights of $A$.

\begin{definition}\label{connection2}\rm
Let $\alpha, \beta \in \Lambda$. We say that $\alpha$ is {\it connected} to $\beta$ if either $\beta = \epsilon\alpha$ for some $\epsilon \in \{1,-1\}$, or there exists $\{\sigma_1,\sigma_2,\ldots,\sigma_n\} \subset \pm \Lambda \cup \pm \Gamma$, with $n \geq 2$, such that

\begin{enumerate}
\item[i)] $\sigma_1 = \alpha$.
\item[ii)] $\sigma_1 + \sigma_2 \in \pm\Lambda \cup \pm\Gamma,$

\vspace{0.05cm} $\vdots$

\hspace{-0.5cm} $\sigma_1 + \sigma_2 + \cdots + \sigma_{n-1} \in \pm\Lambda \cup \pm\Gamma.$

\item[iii)] $\sigma_1 + \sigma_2 + \cdots + \sigma_n \in \{\beta, -\beta\}.$
\end{enumerate}

\noindent We also say that $\{\sigma_1,\ldots,\sigma_n\}$ is a {\it connection} from $\alpha$ to $\beta$.
\end{definition}

\noindent As in the previous section we can prove the next results.

\begin{proposition}\label{pro-1}
The relation $\approx$ in $\Lambda$, defined by $\alpha \approx\beta$ if and only if $\alpha$ is connected to $\beta$, is an equivalence relation.
\end{proposition}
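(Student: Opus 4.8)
The goal is to show that the relation $\approx$ on $\Lambda$ is reflexive, symmetric and transitive. The proof parallels that of Proposition \ref{pro1}, so I would begin by noting that only the bookkeeping of signs changes; the underlying idea is the same, namely that a connection can be read backwards and two connections can be concatenated. Reflexivity is immediate from the first clause of Definition \ref{connection2}: taking $\epsilon = 1$ gives $\alpha \approx \alpha$. So the two things to check are symmetry and transitivity.

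\textbf{Symmetry.} Suppose $\alpha \approx \beta$. If $\beta = \epsilon\alpha$ with $\epsilon \in \{1,-1\}$, then $\alpha = \epsilon\beta$ and hence $\beta \approx \alpha$. Otherwise fix a connection $\{\sigma_1,\ldots,\sigma_n\} \subset \pm\Lambda \cup \pm\Gamma$ from $\alpha$ to $\beta$ with $n \geq 2$. Put $\tau := \sigma_1 + \cdots + \sigma_n \in \{\beta,-\beta\}$ and consider the reversed, negated sequence
\[
\{-\tau,\ \sigma_n,\ \sigma_{n-1},\ \ldots,\ \sigma_2\}.
\]
Each entry again lies in $\pm\Lambda \cup \pm\Gamma$: the $\sigma_j$ do by hypothesis, and $-\tau \in \{\beta,-\beta\} \subset \pm\Lambda$ since $\beta \in \Lambda$. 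I would check that the partial sums of this sequence are, up to a sign, exactly the partial sums appearing in clauses ii)--iii) of the original connection: indeed
\[
-\tau + \sigma_n + \sigma_{n-1} + \cdots + \sigma_{k} \;=\; -(\sigma_1 + \sigma_2 + \cdots + \sigma_{k-1})
\]
for each $k$ with $2 \leq k \leq n$, and these lie in $\pm\Lambda \cup \pm\Gamma$ precisely because the sums $\sigma_1 + \cdots + \sigma_{k-1}$ do (this set is stable under multiplication by $-1$). The final sum, reached at $k = 2$, equals $-\sigma_1 = -\alpha \in \{\alpha,-\alpha\}$. Hence the new sequence is a connection from $\beta$ to $\alpha$, and $\beta \approx \alpha$. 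One small degenerate case to dispose of: if $n = 2$ the reversed sequence is $\{-\tau,\sigma_2\}$, whose single required partial sum $-\tau$ lies in $\pm\Lambda$ and whose total sum is $-\sigma_1 = -\alpha$, so the argument still applies.

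\textbf{Transitivity.} Suppose $\alpha \approx \beta$ and $\beta \approx \eta$. If either link is of the trivial type $\beta = \epsilon\alpha$ or $\eta = \epsilon'\beta$, one substitutes and reduces to the other link (possibly composed with a sign flip, handled via symmetry), so assume both are given by genuine connections, say $\{\sigma_1,\ldots,\sigma_n\}$ from $\alpha$ to $\beta$ and $\{\theta_1,\ldots,\theta_m\}$ from $\beta$ to $\eta$. Let $\sigma_1 + \cdots + \sigma_n =: \tau \in \{\beta,-\beta\}$. After replacing the second connection by its negative if $\tau = -\beta$ (so that the concatenation point matches), form
\[
\{\sigma_1,\ \sigma_2,\ \ldots,\ \sigma_n,\ \theta_2,\ \theta_3,\ \ldots,\ \theta_m\}.
\]
Its partial sums up to index $n$ are those of the first connection; the partial sum through $\sigma_n$ is $\tau = \pm\beta \in \pm\Lambda$; and for $k \geq 2$ the partial sum $\sigma_1 + \cdots + \sigma_n + \theta_2 + \cdots + \theta_k$ equals $\pm(\theta_1 + \cdots + \theta_k)$ (with the sign fixed by the earlier normalization), which lies in $\pm\Lambda \cup \pm\Gamma$ because the corresponding partial sum of the second connection does. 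The total sum is $\pm(\theta_1 + \cdots + \theta_m) \in \{\eta,-\eta\}$. Thus $\alpha \approx \eta$.

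\textbf{Main obstacle.} There is no deep obstacle here — the content is entirely combinatorial. The one point requiring care is the sign normalization at the splice in the transitivity argument (and the appearance of $-\tau$ in the symmetry argument): one must use that $\pm\Lambda \cup \pm\Gamma$ and the ``admissible partial sum'' conditions are invariant under global negation, and that $\beta \in \Lambda$ guarantees $\pm\beta \in \pm\Lambda$ so the endpoints of the concatenated sequence are legitimate entries. Once those invariances are observed the verification is routine, exactly as in \cite[Proposition 2.1]{YoLie}, so in the final text this proof can reasonably be abbreviated to a reference to that argument with the remark that the only modification is to allow partial sums in $\pm\Lambda \cup \pm\Gamma$ rather than just $\pm\Gamma$.
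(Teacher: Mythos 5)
Your overall plan (reflexivity from the trivial clause, symmetry by reversing a connection, transitivity by splicing two connections after a sign normalization) is precisely the argument the paper intends when it says the result is proved ``as in the previous section'', i.e.\ as in \cite[Proposition 2.1]{YoLie}; your reflexivity and transitivity steps are correct, including the sign normalization at the splice and the use of the closure of $\pm\Lambda\cup\pm\Gamma$ under negation.

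The symmetry step, however, fails as written in one of the two cases. Definition \ref{connection2}-i) requires the \emph{first} entry of a connection from $\beta$ to $\alpha$ to be exactly $\beta$, not merely an element of $\{\beta,-\beta\}\subset\pm\Lambda$, which is all you verify. Your sequence $\{-\tau,\sigma_n,\ldots,\sigma_2\}$ starts with $-\tau$; when $\tau=\sigma_1+\cdots+\sigma_n=\beta$ this first entry is $-\beta\neq\beta$ (outside characteristic $2$), so the sequence is not a connection from $\beta$ to $\alpha$ --- indeed it would at best be a connection from $-\beta$, and $-\beta$ need not even belong to $\Lambda$, since symmetry of the weights system is only assumed in Section 5. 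The repair is the standard one: always start the reversed sequence at $\beta$ and adjust the sign of the tail. If $\tau=\beta$, take $\{\beta,-\sigma_n,\ldots,-\sigma_2\}$, whose partial sums are $\sigma_1+\cdots+\sigma_{k-1}$ and whose total is $\sigma_1=\alpha$; if $\tau=-\beta$, take $\{\beta,\sigma_n,\ldots,\sigma_2\}$ (which is what your formula gives in this case), whose partial sums are $-(\sigma_1+\cdots+\sigma_{k-1})$ and whose total is $-\alpha$. In both cases all entries and partial sums lie in $\pm\Lambda\cup\pm\Gamma$ by closure under negation, so $\beta\approx\alpha$. With this correction your proof coincides with the paper's intended argument.
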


\begin{remark}\label{Remark2}
Let $\alpha, \beta \in \Lambda$ such that $\alpha \approx \beta.$ If $\beta + \mu \in \Lambda,$ for $\mu \in \Lambda \cup \Gamma,$ then $\alpha \approx \beta + \mu$. Considering the connection $\{\beta,\mu\}$ we get $\beta \approx \beta + \mu,$ and by transitivity $\alpha \approx \beta + \mu$.
\end{remark}

\noindent By Proposition \ref{pro-1} the connection relation is an equivalence relation in $\Lambda$. From here, we can consider the quotient set $$\Lambda / \approx := \{[\alpha]: \alpha \in \Lambda \},$$ becoming $[\alpha]$ the set of nonzero weights which are connected to $\alpha$. Our next goal in this section is to associate an (adequate) ideal $\mathscr{A}_{[\alpha]}$ of the algebra $A$ to any $[\alpha] \in \Lambda / \approx$. Fix $\alpha \in \Lambda$, we start by defining the sets $$A_{0,[\alpha]} := \bigl(\sum_{-\beta \in \Gamma, \beta \in [\alpha]}\rho(L_{-\beta})(A_{\beta})\bigr) + \bigl(\sum_{\beta \in [\alpha]}A_{-\beta}A_{\beta}\bigr) \subset A_0$$ and $$A_{[\alpha]} := \bigoplus \limits_{\beta \in [\alpha]} A_{\beta}.$$
  Hence, we denote by $\mathscr{A}_{[\alpha]}$ the direct sum of the two subspaces above. That is, $$\mathscr{A}_{[\alpha]} := A_{0,[\alpha]} \oplus A_{[\alpha]}.$$

\begin{proposition}\label{pro2}
For any $[\alpha] \in \Lambda / \approx$ we have $\mathscr{A}_{[\alpha]}\mathscr{A}_{[\alpha]} \subset \mathscr{A}_{[\alpha]}$.
\end{proposition}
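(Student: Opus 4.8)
The plan is to expand the product $\mathscr{A}_{[\alpha]}\mathscr{A}_{[\alpha]} = (A_{0,[\alpha]} \oplus A_{[\alpha]})(A_{0,[\alpha]} \oplus A_{[\alpha]})$ into the three kinds of summands $A_{0,[\alpha]}A_{0,[\alpha]}$, $A_{0,[\alpha]}A_{[\alpha]}$ and $A_{[\alpha]}A_{[\alpha]}$, and check each is contained in $\mathscr{A}_{[\alpha]}$. Recalling the definitions, $A_{0,[\alpha]}$ is itself a sum of pieces of the form $\rho(L_{-\beta})(A_\beta)$ with $-\beta \in \Gamma$, $\beta \in [\alpha]$, and $A_{-\beta}A_\beta$ with $\beta \in [\alpha]$, while $A_{[\alpha]} = \bigoplus_{\beta \in [\alpha]}A_\beta$. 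So really I must bound products of two such generators, using Lemma \ref{lema-1}-iii) and v) to control the weight of each product and Remark \ref{Remark2} to keep the resulting nonzero weight inside the class $[\alpha]$.

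The key steps, in order. First, the cheapest summand: for $\beta, \beta' \in [\alpha]$, by Lemma \ref{lema-1}-iii) $A_\beta A_{\beta'} \subset A_{\beta+\beta'}$; if $\beta+\beta' = 0$ this lands in $A_{-\beta}A_\beta \subset A_{0,[\alpha]}$, and if $\beta+\beta' \in \Lambda$ then Remark \ref{Remark2} gives $\beta+\beta' \in [\alpha]$, so it lands in $A_{[\alpha]}$; hence $A_{[\alpha]}A_{[\alpha]} \subset \mathscr{A}_{[\alpha]}$. Second, $A_{0,[\alpha]}A_{[\alpha]}$: since $A_{0,[\alpha]} \subset A_0$, for $\beta' \in [\alpha]$ we have $A_0 A_{\beta'} \subset A_{\beta'} \subset A_{[\alpha]}$ by Lemma \ref{lema-1}-iii), which already settles it (one can be coarse here and not even split $A_{0,[\alpha]}$ into its generators). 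Third, and the real work, $A_{0,[\alpha]}A_{0,[\alpha]}$: here I expand $A_{0,[\alpha]}$ into generators and treat the cases $A_{-\beta}A_\beta \cdot A_{-\beta'}A_{\beta'}$, $A_{-\beta}A_\beta \cdot \rho(L_{-\beta'})(A_{\beta'})$, and $\rho(L_{-\beta})(A_\beta)\cdot\rho(L_{-\beta'})(A_{\beta'})$; in each case, by Lemma \ref{lema-1}-iii) (after noting by part v) that $\rho(L_{-\beta})(A_\beta) \subset A_0$) the product sits in $A_0 A_0 \subset A_0$, so it is harmless for the membership in $\mathscr{A}_{[\alpha]}$ — it already lies in $A_{0,[\alpha]}$? — no: one must be careful, $A_0 A_0 \subset A_0$ does not a priori lie in $A_{0,[\alpha]}$. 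This is the subtle point.

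I expect the main obstacle to be precisely showing $A_{0,[\alpha]}A_{0,[\alpha]} \subset \mathscr{A}_{[\alpha]}$, because the naive weight bookkeeping only tells us the product lands in $A_0$, not in the specific subspace $A_{0,[\alpha]}$. The resolution I would pursue mirrors the proof of Proposition \ref{pro-9} / Proposition \ref{pro-2}: use Leibniz's law and Equation \eqref{fundamental} to rewrite each such product as a combination of generators of the required form. For instance, for $a_{-\beta}a_\beta \cdot a_{-\beta'}a_{\beta'}$ with all four weights' classes equal to $[\alpha]$, regroup using commutativity and associativity of $A$ so that a factor $A_{\beta}A_{-\beta'} \subset A_{\beta-\beta'}$ appears; if $\beta = \beta'$ this is in $A_0$ and one continues, and if $\beta - \beta' \in \Lambda$ then Remark \ref{Remark2} puts $\beta-\beta'$ in $[\alpha]$ and the product becomes $A_{\beta-\beta'}\cdot(A_{-\beta}A_{\beta'})$-type, i.e. a product $A_{-\gamma}A_\gamma$-style term with $\gamma \in [\alpha]$; a symmetric analysis handles the terms involving $\rho$, using Lemma \ref{lema-1}-v) to see $\rho(L_{-\beta})(A_\beta) \subset A_0$ and the identity $\rho(h)(a b) = \rho(h)(a)b + a\rho(h)(b)$ to move a $\rho$ past a product. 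Since the whole point of the paper is that such connection arguments close up, I am confident every case reduces either to a generator of $A_{0,[\alpha]}$ or to $A_{[\alpha]}$; writing it cleanly is the only real labour, so in the final text I would present the $A_{[\alpha]}A_{[\alpha]}$ and $A_{0,[\alpha]}A_{[\alpha]}$ cases in full and indicate that $A_{0,[\alpha]}A_{0,[\alpha]} \subset \mathscr{A}_{[\alpha]}$ follows analogously by a case check using Remark \ref{Remark2}, exactly as in the proof of Proposition \ref{pro-2}.
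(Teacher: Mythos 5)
Your proposal is correct and follows essentially the same route as the paper: split the product into $A_{[\alpha]}A_{[\alpha]}$, $A_{0,[\alpha]}A_{[\alpha]}$ and $A_{0,[\alpha]}A_{0,[\alpha]}$, handle the first two via Lemma \ref{lema-1}-iii) and Remark \ref{Remark2}, and resolve the genuinely delicate summand $A_{0,[\alpha]}A_{0,[\alpha]}$ by regrouping with commutativity/associativity and by using that $\rho(L_{-\beta})$ is a derivation (Leibniz rule) to re-express each product of generators as a sum of terms of the form $\rho(L_{-\beta})(A_{\beta})$ and $A_{-\mu}A_{\mu}$ with $\mu\in[\alpha]$. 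The case analysis you defer (in particular $\rho(L_{-\beta})(A_{\beta})\rho(L_{-\nu})(A_{\nu})$ and the degenerate case $\beta=\beta'$) is precisely what the paper writes out in full, and it closes exactly as you predict, so the only difference is one of completeness of exposition, not of method.
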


\begin{proof}
Since the algebra $A$ is commutative we have
\begin{equation}\label{cero}
\mathscr{A}_{[\alpha]}\mathscr{A}_{[\alpha]} = \Bigl(A_{0,[\alpha]} \oplus A_{[\alpha]}\Bigr)\Bigl(A_{0,[\alpha]} \oplus A_{[\alpha]}\Bigl) \subset A_{0,[\alpha]}A_{0,[\alpha]} + A_{0,[\alpha]}A_{[\alpha]} + A_{[\alpha]}A_{[\alpha]}.
\end{equation}

Let us consider the second summand in Equation (\ref{cero}). Given $\beta \in [\alpha]$ we have $A_{0,[\alpha]}A_{\beta} \subset A_0A_{\beta} \subset A_{\beta}$, by Lemma \ref{lema-1}-iii). Hence
\begin{equation}\label{ceroo}
A_{0,[\alpha]}A_{\beta} \subset A_{[\alpha]}.
\end{equation}

For the third summand in Equation (\ref{cero}), given $\beta, \nu \in [\alpha]$ such that $0 \neq A_{\beta}A_{\nu} \subset A_{\beta+\nu}.$ If $\beta + \nu = 0$ we have $A_{-\beta}A_{\beta} \subset A_0$ and so $A_{-\beta}A_{\beta} \subset A_{0,[\alpha]}.$ Suppose $\beta + \nu \in \Lambda$, then by Remark \ref{Remark2} we have $\beta + \nu \in [\alpha]$ and so $A_{\beta}A_{\nu} \subset A_{\beta+\nu} \subset A_{[\alpha]}$. Hence $(\bigoplus_{\beta \in [\alpha]}A_{\beta})(\bigoplus_{\nu \in [\alpha]}A_{\nu}) \subset A_{0,[\alpha]} \oplus A_{[\alpha]}$. That is,
\begin{equation}\label{eq0.5}
A_{[\alpha]}A_{[\alpha]} \subset \mathscr{A}_{[\alpha]}.
\end{equation}

Finally we consider the first summand $A_{0,[\alpha]}A_{0,[\alpha]}$ and suppose there exist $\beta, \nu \in [\alpha]$ such that $$\Bigl(\rho(L_{-\beta})(A_{\beta}) + A_{-\beta}A_{\beta}\Bigr)\Bigl(\rho(L_{-\nu})(A_{\nu}) + A_{-\nu}A_{\nu}\Bigr) \neq 0,$$ so
\begin{eqnarray}
&& \rho(L_{-\beta})(A_{\beta})\rho(L_{-\nu})(A_{\nu}) + \rho(L_{-\beta})(A_{\beta})(A_{-\nu}A_{\nu}) \nonumber \\
&& \quad \quad + (A_{-\beta}A_{\beta})\rho(L_{-\nu})(A_{\nu})
+ (A_{-\beta}A_{\beta})(A_{-\nu}A_{\nu}) \neq 0 \label{ideal_A}
\end{eqnarray}
For the last summand in Equation \eqref{ideal_A}, in case $\nu \neq -\beta,$ by the commutativity and associativity of $A$ we have $$(A_{-\beta}A_{\beta})(A_{-\nu}A_{\nu}) = (A_{-\beta}A_{-\nu})(A_{\beta}A_{\nu}) \subset A_{-(\beta+\nu)}A_{(\beta+\nu)} \subset A_{0,[\alpha]}$$ by Remark \ref{Remark2}. In case $\nu = -\beta,$ it follows $$(A_{-\beta}A_{\beta})(A_{\beta}A_{-\beta}) = A_{-\beta}(A_{\beta}A_{\beta}A_{-\beta}) \subset A_{-\beta}A_{\beta} \subset A_{0,[\alpha]}.$$ For the second summand in Equation \eqref{ideal_A}, $\rho(L_{-\beta})(A_{\beta})(A_{-\nu}A_{\nu}),$ since $\rho(L_{-\beta})$ is a derivation in $A$ we get
\begin{eqnarray*}
\begin{split}
& \rho(L_{-\beta})(A_{\beta})(A_{-\nu}A_{\nu}) \subset \rho(L_{-\beta})(A_{\beta}(A_{-\nu}A_{\nu})) + A_{\beta}\rho(L_{-\beta})(A_{-\nu}A_{\nu})
\end{split}
\end{eqnarray*}
but $\rho(L_{-\beta})(A_{\beta}(A_{-\nu}A_{\nu})) \subset \rho(L_{-\beta})(A_{\beta})$ and $A_{\beta}\rho(L_{-\beta})(A_{-\nu}A_{\nu}) \subset A_{\beta}A_{-\beta}$ so $$\rho(L_{-\beta})(A_{\beta})(A_{-\nu}A_{\nu}) \subset \rho(L_{-\beta})(A_{\beta}) + A_{-\beta}A_{\beta} \subset A_{0,[\alpha]}.$$ By commutativity we also get the summand $(A_{-\beta}A_{\beta})\rho(L_{-\nu})(A_{\nu}) \subset A_{0,[\alpha]}$. Finally, for the first summand, since $\rho(L_{-\beta})$ is a derivation, we have $$\rho(L_{-\beta})\bigl(A_{\beta}\rho(L_{-\nu})(A_{\nu})\bigr) \subset \rho(L_{-\beta})(A_{\beta})\rho(L_{-\nu})(A_{\nu}) + A_{\beta}\rho(L_{-\beta})\bigl(\rho(L_{-\nu})(A_{\nu})\bigr).$$ As $\rho(L_{-\beta}) \bigl(A_{\beta}\rho(L_{-\nu})(A_{\nu})\bigr) \subset \rho(L_{-\beta})(A_{\beta})$ and, by associativity, $$A_{\beta}\rho(L_{-\beta})\bigl(\rho(L_{-\nu})(A_{\nu})\bigr) \subset A_{\beta}A_{-\beta}$$ then $$\rho(L_{-\beta})(A_{\beta})\rho(L_{-\nu})(A_{\nu}) \subset \rho(L_{-\beta})(A_{\beta}) + A_{\beta}A_{-\beta} \subset A_{0,[\alpha]}.$$

We have showed
\begin{equation}\label{eq0.6}
A_{0,[\alpha]}A_{0,[\alpha]} \subset A_{0,[\alpha]} \subset \mathscr{A}_{[\alpha]}.
\end{equation}

From Equations \eqref{cero}-\eqref{eq0.5} and \eqref{eq0.6} we get $\mathscr{A}_{[\alpha]}\mathscr{A}_{[\alpha]} \subset \mathscr{A}_{[\alpha]}.$
\end{proof}

\begin{proposition}\label{pro9}
For any $[\alpha] \neq [\psi]$ we have $\mathscr{A}_{[\alpha]}\mathscr{A}_{[\psi]}=0$.
\end{proposition}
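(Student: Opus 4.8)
The plan is to mimic the structure of the proof of Proposition \ref{pro-9}, but now working inside the commutative associative algebra $A$ rather than the Lie algebra $L$. Write $\mathscr{A}_{[\alpha]} = A_{0,[\alpha]} \oplus A_{[\alpha]}$ and $\mathscr{A}_{[\psi]} = A_{0,[\psi]} \oplus A_{[\psi]}$, so that by distributivity
\[
\mathscr{A}_{[\alpha]}\mathscr{A}_{[\psi]} \subset A_{0,[\alpha]}A_{0,[\psi]} + A_{0,[\alpha]}A_{[\psi]} + A_{[\alpha]}A_{0,[\psi]} + A_{[\alpha]}A_{[\psi]}.
\]
The main point, from which everything else follows, is to show that the "pure'' product $A_{[\alpha]}A_{[\psi]}$ vanishes: if $\beta \in [\alpha]$ and $\nu \in [\psi]$ satisfy $A_\beta A_\nu \neq 0$, then by Lemma \ref{lema-1}-iii) we have $\beta + \nu \in \Lambda \cup \{0\}$. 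Since $[\alpha] \neq [\psi]$ forces $\beta \neq -\nu$ (otherwise $\beta + \nu = 0$ would give $\beta \approx \nu$ via... actually $\beta = -\nu$ means $\nu = -\beta$, and $\beta$ is connected to $-\beta = \nu$ directly, contradiction), we get $\beta + \nu \in \Lambda$; then the connection $\{\beta, \nu\}$ shows $\beta \approx \nu$, hence $[\alpha] = [\psi]$, a contradiction. Therefore $A_{[\alpha]}A_{[\psi]} = 0$.

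Next I would handle the mixed terms $A_{0,[\alpha]}A_{[\psi]}$ (the term $A_{[\alpha]}A_{0,[\psi]}$ being symmetric). Expanding $A_{0,[\alpha]}$ using its definition, a generator is either of the form $\rho(L_{-\beta})(A_\beta)$ or $A_{-\beta}A_\beta$ with $\beta \in [\alpha]$, $-\beta \in \Gamma$ in the first case. For the product $(A_{-\beta}A_\beta)A_\nu$ with $\nu \in [\psi]$: by associativity this lies in $A_{-\beta}(A_\beta A_\nu)$, and $A_\beta A_\nu = 0$ by the previous paragraph (as $\beta \in [\alpha]$, $\nu \in [\psi]$). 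For the product $\rho(L_{-\beta})(A_\beta)A_\nu$ with $\nu \in [\psi]$: using that $\rho(L_{-\beta})$ acts as a derivation on $A$,
\[
\rho(L_{-\beta})(A_\beta)A_\nu \subset \rho(L_{-\beta})(A_\beta A_\nu) + A_\beta\,\rho(L_{-\beta})(A_\nu).
\]
Here $A_\beta A_\nu = 0$ kills the first summand, while $\rho(L_{-\beta})(A_\nu) \subset A_{-\beta+\nu}$ by Lemma \ref{lema-1}-v); if this is nonzero then $-\beta + \nu \in \Lambda$ (it is nonzero since $[\alpha]\neq[\psi]$ rules out $\nu = \beta$), and then $A_\beta A_{-\beta+\nu} \subset A_\nu$ would be... rather, the connection $\{\beta, -\beta+\nu\}$ gives $\beta \approx \nu$, again contradicting $[\alpha]\neq[\psi]$, so $A_\beta\,\rho(L_{-\beta})(A_\nu)=0$. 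Hence $A_{0,[\alpha]}A_{[\psi]} = 0$, and symmetrically $A_{[\alpha]}A_{0,[\psi]} = 0$.

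Finally, for $A_{0,[\alpha]}A_{0,[\psi]}$, I expand both factors into their four generator-type products exactly as in the proof of Proposition \ref{pro2} (the products $\rho(L_{-\beta})(A_\beta)\cdot\rho(L_{-\nu})(A_\nu)$, $\rho(L_{-\beta})(A_\beta)\cdot(A_{-\nu}A_\nu)$, $(A_{-\beta}A_\beta)\cdot\rho(L_{-\nu})(A_\nu)$, $(A_{-\beta}A_\beta)\cdot(A_{-\nu}A_\nu)$ with $\beta\in[\alpha]$, $\nu\in[\psi]$) and repeatedly push derivations through products via the Leibniz law and use associativity/commutativity to reduce each term to something containing a factor $A_\mu A_{\mu'}$ with $\mu\in[\alpha]$, $\mu'\in[\psi]$, or $A_\beta\,\rho(L_{-\beta})(A_\nu)$-type expressions with mismatched classes — all of which vanish by the arguments of the previous two paragraphs. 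Assembling all four vanishing pieces gives $\mathscr{A}_{[\alpha]}\mathscr{A}_{[\psi]} = 0$. I expect the bookkeeping in this last case to be the main obstacle: one must be careful that every time a derivation $\rho(L_{-\beta})$ is moved across a product, both resulting summands are genuinely of a form already shown to be zero, and that the "nonzero, hence a weight, hence a connection, hence contradiction'' shortcut is legitimately applicable (i.e. the relevant sums of functionals are actually nonzero, which is where the hypothesis $[\alpha]\neq[\psi]$ is used).
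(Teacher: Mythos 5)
Your proposal is correct and follows essentially the same route as the paper's own proof: the same four-summand decomposition of $\mathscr{A}_{[\alpha]}\mathscr{A}_{[\psi]}$, the same key step that a nonzero product $A_{\beta}A_{\nu}$ with $\beta\in[\alpha]$, $\nu\in[\psi]$ yields the weight $\beta+\nu$ linking the two classes, and the same use of associativity and the Leibniz rule to reduce the mixed and $A_{0,[\cdot]}$ terms to products already shown to vanish (the paper likewise handles $A_{0,[\alpha]}A_{0,[\psi]}$ by referring back to the computations of Proposition \ref{pro2}). One small slip to fix: $\{\beta,\nu\}$ is a connection from $\beta$ to $\beta+\nu$, not to $\nu$, so as in Remark \ref{Remark2} you should conclude $\beta\approx\beta+\nu$ and $\nu\approx\beta+\nu$ and then $\beta\approx\nu$ by symmetry and transitivity; your later connection $\{\beta,-\beta+\nu\}$ is used correctly.
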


\begin{proof}
We have
\begin{equation}\label{cuatro}
\Bigl(A_{0,[\alpha]} \oplus A_{[\alpha]}\Bigr)\Bigl(A_{0,[\psi]} \oplus A_{[\psi]}]\Bigr) \subset A_{0,[\alpha]}A_{0,[\psi]} + A_{0,[\alpha]}A_{[\psi]} + A_{[\alpha]}A_{0,[\psi]} + A_{[\alpha]}A_{[\psi]}.
\end{equation}

Consider the above fourth summand $A_{[\alpha]}A_{[\psi]}$ and suppose there exist $\alpha_1 \in [\alpha]$ and $\psi_1 \in [\psi]$ such that $A_{\alpha_1}A_{\psi_1} \neq 0$, so $A_{\alpha_1 +\psi_1} \neq 0$. Then $\alpha_1 + \psi_1  \in \Lambda \cup \{0\}$. As necessarily $\alpha_1 \neq -\psi_1$, it follows that $\alpha_1 + \psi_1  \in \Lambda$. By Remark \ref{Remark2}, $\alpha \sim \alpha_1 + \psi_1$ and $\psi \sim \alpha_1 + \psi_1,$ and by equivalence relation we have $[\alpha] = [\psi]$, a contradiction. Hence $A_{\alpha_1}A_{\psi_1} = 0$ and so
\begin{equation}\label{nueve}
A_{[\alpha]}A_{[\psi]} = 0.
\end{equation}

Consider now the second summand $A_{0,[\alpha]}A_{[\psi]}$ in Equation (\ref{cuatro}). We take $\alpha_1 \in [\alpha]$ and $\psi_1 \in [\psi]$ such that $$\Bigl(\rho(L_{-\alpha_1})(A_{\alpha_1})A_{\psi_1} + A_{-\alpha_1}A_{\alpha_1}\Bigr)A_{\psi_1} \neq 0.$$
Suppose $(A_{-\alpha_1}A_{\alpha_1})A_{\psi_1} \neq 0.$ By using associativity of $A$ we get $A_{-\alpha_1}(A_{\alpha_1}A_{\psi_1}) \neq 0,$ so $A_{\alpha_1+\psi_1} \neq 0$ and then $\alpha_1+\psi_1 \in \Lambda \cup \{0\}$. Arguing as above $\alpha \approx \psi$, a contradiction. If the another summand $\rho(L_{-\alpha_1})(A_{\alpha_1})A_{\psi_1} \neq 0$, since $\rho(L_{-\alpha_1})$ is a derivation then $\rho(L_{-\alpha_1})(A_{\alpha_1}A_{\psi_1})$ or $A_{\alpha_1}\rho(L_{-\alpha_1})(A_{\psi_1})$ is nonzero, but in any case we argue similarly as above to get $\alpha \approx \psi,$ a contradiction. From here
\begin{equation}\label{lex}
A_{0,[\alpha]}A_{[\psi]} = 0.
\end{equation}
By commutativity, $A_{[\alpha]}A_{0,[\psi]} = 0.$

Finally, let us prove $A_{0,[\alpha]}A_{0,[\psi]} = 0$. Suppose there exist $\alpha_1 \in [\alpha], \psi_1 \in [\psi]$ such that $$\rho(L_{-\alpha_1})(A_{\alpha_1})\rho(L_{-\psi_1})(A_{\psi_1}) + \rho(L_{-\alpha_1})(A_{\alpha_1})(A_{-\psi_1}A_{\psi_1})$$ $$+ (A_{-\alpha_1}A_{\alpha_1})\rho(L_{-\psi_1})(A_{\psi_1}) + (A_{-\alpha_1}A_{\alpha_1})(A_{-\psi_1}A_{\psi_1}) \neq 0.$$ We can argue as in the proof of Proposition \ref{pro2} to obtain
\begin{equation}\label{tex}
A_{0,[\alpha]}A_{0,[\psi]} = 0.
\end{equation}
From Equations \eqref{cuatro}-\eqref{tex} we conclude $\mathscr{A}_{[\alpha]}\mathscr{A}_{[\psi]} = 0$.
\end{proof}

\noindent We recall that a subspace $I$ of a commutative algebra $A$ is called an {\it ideal} of $A$ if $AI \subset I$. We say that $A$ is {\it simple} if $AA \neq 0$ and it contains no proper ideals.

\begin{theorem}\label{teo-11}
Let $A$ be a commutative and associative algebra associated to a Lie-Rinehart algebra $L.$ Then the following assertions hold.
\begin{enumerate}
\item[{\rm i)}] For any $[\alpha] \in \Lambda/ \approx$, the linear space $$\mathscr{A}_{[\alpha]} = A_{0,[\alpha]} \oplus A_{[\alpha]}$$ of $A$ associated to $[\alpha]$ is an ideal of $A$.

\item[{\rm ii)}] If $A$ is simple then all weights of $\Lambda$ are connected. Furthermore, $$A_0 = \bigl(\sum\limits_{-\alpha \in \Gamma, \alpha \in \Lambda}\rho(L_{-\alpha})(A_{\alpha})\bigr) + \bigl(\sum\limits_{\alpha \in \Lambda}A_{-\alpha}A_{\alpha}\bigr).$$
\end{enumerate}
\end{theorem}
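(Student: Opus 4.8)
The argument follows, on the commutative-algebra side, the same scheme used for Theorem \ref{teo-1}-i), now with Propositions \ref{pro2} and \ref{pro9} playing the role of Propositions \ref{pro-2} and \ref{pro-9}.

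For i), recall that an ideal of the commutative algebra $A$ is just a subspace $\mathscr{A}$ with $A\mathscr{A}\subset\mathscr{A}$, so it suffices to check $A\,\mathscr{A}_{[\alpha]}\subset\mathscr{A}_{[\alpha]}$. I would expand $A=A_0\oplus\bigl(\bigoplus_{\psi\in\Lambda}A_\psi\bigr)$ and handle the summands by type. If $\psi\in[\alpha]$ then $A_\psi\subset\mathscr{A}_{[\alpha]}$, so $A_\psi\mathscr{A}_{[\alpha]}\subset\mathscr{A}_{[\alpha]}\mathscr{A}_{[\alpha]}\subset\mathscr{A}_{[\alpha]}$ by Proposition \ref{pro2}. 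If $\psi\in\Lambda\setminus[\alpha]$, let $[\psi']$ be its $\approx$-class; then $A_\psi\subset\mathscr{A}_{[\psi']}$ and Proposition \ref{pro9}, together with commutativity of $A$, gives $A_\psi\mathscr{A}_{[\alpha]}\subset\mathscr{A}_{[\psi']}\mathscr{A}_{[\alpha]}=0$. There remains the term $A_0\mathscr{A}_{[\alpha]}=A_0A_{0,[\alpha]}+A_0A_{[\alpha]}$: the second piece lies in $A_{[\alpha]}$ by Lemma \ref{lema-1}-iii) (since $A_0A_\nu\subset A_\nu$), and for $A_0A_{0,[\alpha]}\subset A_{0,[\alpha]}$ I would rerun the bookkeeping already done for $A_{0,[\alpha]}A_{0,[\alpha]}$ inside the proof of Proposition \ref{pro2}: on a product $A_0\rho(L_{-\beta})(A_\beta)$ use that $\rho(L_{-\beta})$ is a derivation of $A$ to put it inside $\rho(L_{-\beta})(A_0A_\beta)+\rho(L_{-\beta})(A_0)A_\beta\subset\rho(L_{-\beta})(A_\beta)+A_{-\beta}A_\beta$, invoking Lemma \ref{lema-1}-iii) and v); and on a product $A_0(A_{-\beta}A_\beta)$ use associativity and Lemma \ref{lema-1}-iii) to land in $A_{-\beta}A_\beta$. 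All resulting pieces lie in $\mathscr{A}_{[\alpha]}$, which proves i).

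For ii), assume $A$ is simple. By i) each $\mathscr{A}_{[\alpha]}$ is an ideal of $A$, hence $\mathscr{A}_{[\alpha]}\in\{0,A\}$; since $0\neq A_\alpha\subset\mathscr{A}_{[\alpha]}$ we get $\mathscr{A}_{[\alpha]}=A$ for every $\alpha\in\Lambda$. Comparing $\mathscr{A}_{[\alpha]}=A_{0,[\alpha]}\oplus A_{[\alpha]}$ with the weight decomposition $A=A_0\oplus\bigl(\bigoplus_{\psi\in\Lambda}A_\psi\bigr)$, and using that $A_{0,[\alpha]}\subset A_0$ while $A_{[\alpha]}=\bigoplus_{\psi\in[\alpha]}A_\psi$ is a sub-sum of $\bigoplus_{\psi\in\Lambda}A_\psi$, projecting onto the graded components forces $A_{[\alpha]}=\bigoplus_{\psi\in\Lambda}A_\psi$, i.e.\ $[\alpha]=\Lambda$ (all weights connected), and simultaneously $A_0=A_{0,[\alpha]}$; substituting $[\alpha]=\Lambda$ in the definition of $A_{0,[\alpha]}$ gives the displayed identity for $A_0$.

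The only step with genuine content is $A_0A_{0,[\alpha]}\subset A_{0,[\alpha]}$, and even that is a routine repetition of a passage already performed inside Proposition \ref{pro2}; everything else is a direct consequence of Propositions \ref{pro2}, \ref{pro9} and the definition of a simple commutative algebra. (To be fully precise one should exclude the degenerate case $\Lambda=\emptyset$ from ii), since then the ``all weights connected'' assertion is vacuous while the displayed formula would read $A_0=0$.)
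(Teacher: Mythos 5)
Your proof is correct and takes essentially the same approach as the paper: part i) reduces to Propositions \ref{pro2} and \ref{pro9} together with the containment $A_0\mathscr{A}_{[\alpha]}\subset\mathscr{A}_{[\alpha]}$ (which the paper dispatches with a terse appeal to associativity, whereas you rightly spell out that the derivation property of $\rho(L_{-\beta})$ is also needed for the piece $A_0\,\rho(L_{-\beta})(A_{\beta})$), and part ii) follows, exactly as in the paper, from simplicity forcing $\mathscr{A}_{[\alpha]}=A$ and then comparing graded components. Your parenthetical remark about the degenerate case $\Lambda=\emptyset$ is a fair minor caveat that the paper leaves implicit.
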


\begin{proof}
i) Since $\mathscr{A}_{[\alpha]}A_0 \subset \mathscr{A}_{[\alpha]}$ (by associativity of $A$), Propositions \ref{pro2} and \ref{pro9} allow us to assert  $$\mathscr{A}_{[\alpha]}A = \mathscr{A}_{[\alpha]}\Bigl(A_0 \oplus (\bigoplus\limits_{\beta \in [\alpha]}A_{\beta}) \oplus (\bigoplus\limits_{\psi \notin [\alpha]}A_{\psi})\Bigr) \subset \mathscr{A}_{[\alpha]}.$$ We conclude $\mathscr{A}_{[\alpha]}$ is an ideal of $A$.

\medskip

ii) The simplicity of $A$ implies $\mathscr{A}_{[\alpha]} = A$, for any $\alpha \in \Lambda$. From here, it is clear that $[\alpha] = \Lambda$ and $A_0 = \sum\limits_{-\alpha \in \Gamma, \alpha \in \Lambda}\rho(L_{-\alpha})(A_{\alpha}) + \sum\limits_{\alpha \in \Lambda}A_{-\alpha}A_{\alpha}$.
\end{proof}

\begin{theorem}\label{teo2}
Let $A$ be a commutative and associative algebra associated to a Lie-Rinehart algebra $L.$  Then $$A = V + \sum\limits_{[\alpha] \in \Lambda/\approx}\mathscr{A}_{[\alpha]},$$ where $V$ is a linear complement in $A_0$ of $\bigl(\sum_{-\alpha \in \Gamma, \alpha \in \Lambda}\rho(L_{-\alpha})(A_{\alpha})\bigr) + \bigl(\sum_{\alpha \in \Lambda}A_{-\alpha}A_{\alpha}\bigr)$ and any $\mathscr{A}_{[\alpha]}$ is one of the ideals of $A$ described in Theorem \ref{teo-11}-i). Furthermore, $\mathscr{A}_{[\alpha]}\mathscr{A}_{[\psi]} = 0$ when $[\alpha] \neq [\psi].$
\end{theorem}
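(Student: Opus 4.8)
The plan is to follow verbatim the scheme used to prove Theorem \ref{teo-2}, now working with the weights decomposition of $A$ instead of the roots decomposition of $L$, and invoking Proposition \ref{pro9} in place of Proposition \ref{pro-9}. Three things have to be checked: that each $\mathscr{A}_{[\alpha]}$ is a well-defined ideal of $A$, that the $\mathscr{A}_{[\alpha]}$ together with $V$ span $A$, and that ideals attached to distinct classes annihilate each other.

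First I would observe that $\mathscr{A}_{[\alpha]} = A_{0,[\alpha]} \oplus A_{[\alpha]}$ does not depend on the chosen representative of $[\alpha]$ --- this is immediate, since its definition only involves the full equivalence class --- and that it is an ideal of $A$ by Theorem \ref{teo-11}-i). Next, for the spanning assertion, I would start from the weight space decomposition $A = A_0 \oplus \bigl(\bigoplus_{\alpha \in \Lambda}A_{\alpha}\bigr)$. For $\alpha \in \Lambda$ one has $A_{\alpha} \subset A_{[\alpha]} \subset \mathscr{A}_{[\alpha]}$, so $\bigoplus_{\alpha \in \Lambda}A_{\alpha} \subset \sum_{[\alpha] \in \Lambda/\approx}\mathscr{A}_{[\alpha]}$. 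For the homogeneous component $A_0$, I would note that, because $\approx$ partitions $\Lambda$,
$$\sum_{[\alpha] \in \Lambda/\approx}A_{0,[\alpha]} = \Bigl(\sum_{-\alpha \in \Gamma, \alpha \in \Lambda}\rho(L_{-\alpha})(A_{\alpha})\Bigr) + \Bigl(\sum_{\alpha \in \Lambda}A_{-\alpha}A_{\alpha}\Bigr),$$
which is precisely the subspace of $A_0$ that $V$ complements; adding $V$ therefore recovers all of $A_0$. Combining the two parts yields $A = V + \sum_{[\alpha] \in \Lambda/\approx}\mathscr{A}_{[\alpha]}$.

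Finally, the orthogonality $\mathscr{A}_{[\alpha]}\mathscr{A}_{[\psi]} = 0$ for $[\alpha] \neq [\psi]$ is exactly the content of Proposition \ref{pro9}, so no further work is needed. I do not anticipate any genuine obstacle in this proof; the only step requiring a little care is the identification of $\sum_{[\alpha]}A_{0,[\alpha]}$ with the subspace of $A_0$ having $V$ as complement, and this is a direct consequence of the fact that the connection relation partitions $\Lambda$ into the classes $[\alpha]$.
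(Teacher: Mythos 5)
Your proposal is correct and follows essentially the same route as the paper: the paper's proof likewise cites Theorem \ref{teo-11}-i) for the ideal property, observes that the weight space decomposition $A = A_0 \oplus (\bigoplus_{\alpha \in \Lambda}A_{\alpha})$ rewrites as $V + \sum_{[\alpha]}\mathscr{A}_{[\alpha]}$, and invokes Proposition \ref{pro9} for the orthogonality. Your only addition is to spell out explicitly the identification of $\sum_{[\alpha]}A_{0,[\alpha]}$ with the complemented subspace of $A_0$, which the paper leaves as "being clear".
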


\begin{proof}
We know that $\mathscr{A}_{[\alpha]}$ is well defined and, by Theorem \ref{teo-11}-i), an ideal of $A$, being clear that $$A = A_0 \oplus (\bigoplus\limits_{\alpha \in \Lambda}A_{\alpha}) = V + \sum\limits_{[\alpha] \in \Lambda/\approx}\mathscr{A}_{[\alpha]}.$$ Finally, Proposition \ref{pro9} gives $\mathscr{A}_{[\alpha]}\mathscr{A}_{[\psi]}=0$ if $[\alpha] \neq [\psi].$
\end{proof}

\medskip

\noindent Let us denote by ${\mathcal Z}(A) := \{a \in A : aA = 0\}$ the {\it center} of the algebra $A$.

\begin{corollary}\label{coro1}
Let
 $(L,A)$  be a Lie-Rinehart algebra. If  ${\mathcal Z}(A) = 0$ and $$A_0 = \bigl(\sum\limits_{-\alpha \in \Gamma, \alpha \in \Lambda}\rho(L_{-\alpha})(A_{\alpha})\bigr) + \bigl(\sum\limits_{\alpha \in \Lambda}A_{-\alpha}A_{\alpha}\bigr),$$ then $A$ is the direct sum of the  ideals given in Theorem \ref{teo-11}-i),
$$A = \bigoplus\limits_{[\alpha] \in \Lambda/\approx}\mathscr{A}_{[\alpha]}.$$ Furthermore, $\mathscr{A}_{[\alpha]}\mathscr{A}_{[\psi]} = 0$ when $[\alpha] \neq [\psi].$
\end{corollary}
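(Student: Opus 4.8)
The plan is to imitate the proof of Corollary~\ref{coro-1}, transporting the argument from the Lie side to the commutative-algebra side. By Theorem~\ref{teo2} we already have the sum decomposition $A = V + \sum_{[\alpha] \in \Lambda/\approx}\mathscr{A}_{[\alpha]}$, and the hypothesis $A_0 = \bigl(\sum_{-\alpha \in \Gamma, \alpha \in \Lambda}\rho(L_{-\alpha})(A_{\alpha})\bigr) + \bigl(\sum_{\alpha \in \Lambda}A_{-\alpha}A_{\alpha}\bigr)$ forces the linear complement $V$ to be zero, so $A = \sum_{[\alpha] \in \Lambda/\approx}\mathscr{A}_{[\alpha]}$. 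It remains to prove the sum is direct, and for this the center hypothesis $\mathcal{Z}(A)=0$ is the key tool.

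To show directness, I would take an arbitrary element $a \in \mathscr{A}_{[\alpha]} \cap \bigl(\sum_{[\psi]\in \Lambda/\approx,\, [\psi]\neq[\alpha]}\mathscr{A}_{[\psi]}\bigr)$ and argue that $aA = 0$, hence $a \in \mathcal{Z}(A) = 0$. On the one hand, since $a \in \mathscr{A}_{[\alpha]}$ and Proposition~\ref{pro9} gives $\mathscr{A}_{[\alpha]}\mathscr{A}_{[\psi]} = 0$ whenever $[\psi]\neq[\alpha]$, we get $a \cdot \bigl(\sum_{[\psi]\neq[\alpha]}\mathscr{A}_{[\psi]}\bigr) = 0$. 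On the other hand, writing $a$ using its membership in $\sum_{[\psi]\neq[\alpha]}\mathscr{A}_{[\psi]}$ and using the same Proposition~\ref{pro9}, we get $a\cdot \mathscr{A}_{[\alpha]} = 0$. Adding these, and using $A = \sum_{[\alpha]\in\Lambda/\approx}\mathscr{A}_{[\alpha]}$, we conclude $aA = 0$, so $a = 0$. This establishes the internal direct sum $A = \bigoplus_{[\alpha]\in\Lambda/\approx}\mathscr{A}_{[\alpha]}$, and the orthogonality $\mathscr{A}_{[\alpha]}\mathscr{A}_{[\psi]}=0$ for $[\alpha]\neq[\psi]$ is just a restatement of Proposition~\ref{pro9}.

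There is really no serious obstacle here: the corollary is a routine consequence of Theorem~\ref{teo2} and Proposition~\ref{pro9}, with the center condition playing exactly the role that $\mathcal{Z}(L)=0$ plays in Corollary~\ref{coro-1}. The only point requiring a line of care is checking that $A = \sum \mathscr{A}_{[\alpha]}$ (not merely $V + \sum\mathscr{A}_{[\alpha]}$) genuinely follows from the displayed hypothesis on $A_0$: one must note that each weight space $A_\beta$ with $\beta \in \Lambda$ lies in $\mathscr{A}_{[\beta]}$ by construction, and that the hypothesis says precisely $A_0 = \sum_{[\alpha]} A_{0,[\alpha]} \subset \sum_{[\alpha]}\mathscr{A}_{[\alpha]}$, so every summand of $A = A_0 \oplus (\bigoplus_{\alpha\in\Lambda}A_\alpha)$ is absorbed. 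I would present the proof in three short steps: (1) the sum covers $A$; (2) the sum is direct via the center argument; (3) cite Proposition~\ref{pro9} for orthogonality.
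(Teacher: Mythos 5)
Your proposal is correct and follows essentially the same route as the paper: the hypothesis on $A_0$ makes the complement in Theorem \ref{teo2} vanish so that $A=\sum_{[\alpha]}\mathscr{A}_{[\alpha]}$, and directness is obtained exactly as in Corollary \ref{coro-1} by taking $a$ in the intersection, applying Proposition \ref{pro9} twice to get $aA=0$, and invoking ${\mathcal Z}(A)=0$. No discrepancies worth noting.
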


\begin{proof}
Since $A_0 = \bigl(\sum_{-\alpha \in \Gamma, \alpha \in \Lambda}\rho(L_{-\alpha})(A_{\alpha})\bigr) + \bigl(\sum_{\alpha \in \Lambda}A_{-\alpha}A_{\alpha}\bigr)$ we obtain $A = \sum_{[\alpha] \in \Lambda/\approx} \mathscr{A}_{[\alpha]}$.
To verify the direct character of the sum, take some $$a \in \mathscr{A}_{[\alpha]} \cap (\sum\limits_{[\psi]\in \Lambda/ \approx,
[\psi] \neq [\alpha]}\mathscr{A}_{[\psi]}).$$  Since $a \in \mathscr{A}_{[\alpha]}$, the fact $\mathscr{A}_{[\alpha]}\mathscr{A}_{[\psi]}=0$ when $[\alpha] \neq [\psi]$ gives us
$$a(\sum_{[\psi]\in \Lambda/ \approx, [\psi] \neq
[\alpha]}\mathscr{A}_{[\psi]}) = 0.$$ In a similar way, since $a \in \sum_{[\psi]\in \Lambda/ \approx, [\psi] \neq [\alpha]}\mathscr{A}_{[\psi]}$ we get $a\mathscr{A}_{[\alpha]}=0.$ That is, $a \in {\mathcal Z}(A)$ and so $a=0$.
\end{proof}

\section{Relating the decompositions of $L$ and $A$}

The aim of this section is to show that the decompositions of $L$ and $A$ as direct sum of ideals, given in Sections 2 and 3 respectively, are closely related.

\begin{definition}\label{tight}\rm
A split Lie-Rinehart algebra $(L,A)$ is {\it tight} if ${\mathcal Z}(L)={\mathcal Z}(A)=0,$ $AA = A,$ $AL = L$ and
\begin{eqnarray*}
&& H = \bigl(\sum\limits_{\gamma \in \Gamma, -\gamma \in \Lambda}A_{-\gamma}L_{\gamma}\bigr) + \bigl(\sum\limits_{\gamma \in \Gamma}[L_{-\gamma}, L_{\gamma}]\bigr),\\
&& A_0 = \bigl(\sum\limits_{-\alpha \in \Gamma, \alpha \in \Lambda}\rho(L_{-\alpha})(A_{\alpha})\bigr) + \bigl(\sum\limits_{\alpha \in \Lambda}A_{-\alpha}A_{\alpha}\bigr).
\end{eqnarray*}
\end{definition}

\noindent If $(L,A)$ is tight then Corollaries \ref{coro-1} and \ref{coro1} say that $$\hbox{$L = \bigoplus\limits_{[\gamma] \in \Gamma/\sim} I_{[\gamma]}$ \hspace{0.2cm} and \hspace{0.2cm}  $A =\bigoplus\limits_{[\alpha] \in \Lambda/\approx} \mathscr{A}_{[\alpha]}$},$$ with any $I_{[\gamma]}$ an ideal of $L$ verifying $[I_{[\gamma]},I_{[\delta]}]=0$ if $[\gamma] \neq [\delta]$ and any $\mathscr{A}_{[\alpha]}$ an ideal of $A$ satisfying $\mathscr{A}_{[\alpha]}\mathscr{A}_{[\psi]}=0$ if $[\alpha] \neq [\psi]$.

\begin{proposition}
Let $(L,A)$ be a tight split Lie-Rinehart algebra. Then for any $[\gamma] \in \Gamma/\sim$ there exists a unique $[\alpha] \in \Lambda/\approx$ such that $\mathscr{A}_{[\alpha]}I_{[\gamma]} \neq 0$.
\end{proposition}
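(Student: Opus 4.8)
The plan is to exploit the $A$-module decompositions $L=\bigoplus_{[\gamma]}I_{[\gamma]}$ and $A=\bigoplus_{[\alpha]}\mathscr{A}_{[\alpha]}$ provided (via tightness) by Corollaries \ref{coro-1} and \ref{coro1}, together with the fact that $AL=L$. Fix $[\gamma]\in\Gamma/\sim$. From $AL=L$ and the decomposition of $A$ we get $I_{[\gamma]}\subset AL=\sum_{[\alpha]\in\Lambda/\approx}\mathscr{A}_{[\alpha]}L=\sum_{[\alpha]}\big(\mathscr{A}_{[\alpha]}\big(\bigoplus_{[\delta]}I_{[\delta]}\big)\big)$; so the first thing to check is that $\mathscr{A}_{[\alpha]}I_{[\delta]}\subset I_{[\delta]}$ (immediate from Proposition \ref{pro-2}-ii), since $\mathscr{A}_{[\alpha]}\subset A$), whence each product $\mathscr{A}_{[\alpha]}I_{[\delta]}$ lands in $I_{[\delta]}$ and the decomposition of $L$ forces, for our fixed $[\gamma]$, that $I_{[\gamma]}=\sum_{[\alpha]\in\Lambda/\approx}\mathscr{A}_{[\alpha]}I_{[\gamma]}$. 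In particular $I_{[\gamma]}\neq 0$ yields at least one $[\alpha]$ with $\mathscr{A}_{[\alpha]}I_{[\gamma]}\neq 0$, giving existence.

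For uniqueness, suppose $[\alpha]\neq[\psi]$ both satisfy $\mathscr{A}_{[\alpha]}I_{[\gamma]}\neq 0\neq\mathscr{A}_{[\psi]}I_{[\gamma]}$. The key is to trace this down to the root/weight level and produce a connection contradicting $[\alpha]\neq[\psi]$ in $\Lambda/\approx$. Writing everything out in root and weight spaces: $\mathscr{A}_{[\alpha]}I_{[\gamma]}\neq 0$ means some product of a weight piece of $\mathscr{A}_{[\alpha]}$ with a root piece of $I_{[\gamma]}$ is nonzero. The pieces of $\mathscr{A}_{[\alpha]}$ are $A_{0,[\alpha]}$ and the $A_\beta$ with $\beta\in[\alpha]$; the pieces of $I_{[\gamma]}$ are $L_{0,[\gamma]}$ and the $L_\xi$ with $\xi\in[\gamma]$. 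Using Lemma \ref{lema-1}-iv) and the explicit description $L_{0,[\gamma]}=\big(\sum_{\xi\in[\gamma],-\xi\in\Lambda}A_{-\xi}L_\xi\big)+\big(\sum_{\xi\in[\gamma]}[L_{-\xi},L_\xi]\big)$, $A_{0,[\alpha]}=\big(\sum_{-\beta\in\Gamma,\beta\in[\alpha]}\rho(L_{-\beta})(A_\beta)\big)+\big(\sum_{\beta\in[\alpha]}A_{-\beta}A_\beta\big)$, one reduces to a nonzero product of the shape $A_\beta L_\xi\neq 0$ (or a trivially related variant after absorbing the $L_{0,[\gamma]}$, $A_{0,[\alpha]}$ contributions into sums of weights/roots of $[\alpha]$, $[\gamma]$); here $\beta\in\pm[\alpha]$ and $\xi\in\pm[\gamma]$, or $\beta$ is $0$ but then the relevant root of $L_{0,[\gamma]}$ carries the weight data. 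The point is that such a nonzero product, via $A_\beta L_\xi\subset L_{\beta+\xi}$, exhibits $\beta$ as the first link of a connection reaching into $[\gamma]$; doing the same for $[\psi]$ gives $\psi'\in\pm[\psi]$ with a connection into the same $[\gamma]$, and concatenating (with a sign flip if needed, noting $\pm[\alpha]$ issues are harmless since $\sim,\approx$ identify $\alpha$ with $-\alpha$) produces a connection from a representative of $[\alpha]$ to a representative of $[\psi]$ in the sense of Definition \ref{connection2}, so $[\alpha]=[\psi]$, a contradiction.

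I expect the main obstacle to be the bookkeeping in the uniqueness argument: carefully handling the cases where the nonzero product involves $L_{0,[\gamma]}$ or $A_{0,[\alpha]}$ rather than pure weight/root spaces, and making sure that the intermediate partial sums of the $\sigma_i$'s actually lie in $\pm\Lambda\cup\pm\Gamma$ as required by Definition \ref{connection2} — this is where one must use that $A_\beta A_{\beta'}\subset A_{\beta+\beta'}$, $A_\beta L_\xi\subset L_{\beta+\xi}$, $\rho(L_\xi)(A_\beta)\subset A_{\xi+\beta}$ (Lemma \ref{lema-1}) to certify each step. A secondary point is to make explicit why $\mathscr{A}_{[\alpha]}I_{[\gamma]}$ is in fact contained in $I_{[\gamma]}$ (so that the decomposition of $L$ can be invoked to isolate the $[\gamma]$-component): this follows since $\mathscr{A}_{[\alpha]}\subseteq A$ and $AI_{[\gamma]}\subseteq I_{[\gamma]}$ by Proposition \ref{pro-2}-ii). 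Once these are in place, existence is a one-line consequence of $AL=L$ and uniqueness follows from the connection argument above.
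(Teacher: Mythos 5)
Your proposal is correct and follows essentially the paper's own route: existence comes from $AL=L$ together with the two orthogonal decompositions $L=\bigoplus I_{[\gamma]}$, $A=\bigoplus\mathscr{A}_{[\alpha]}$ (the paper phrases this via $\mathcal{Z}(L)=0$ rather than your component comparison, but the substance is the same), and uniqueness from extracting nonzero products $A_{\alpha'}L_{\gamma'}\neq 0$ and $A_{\psi'}L_{\gamma''}\neq 0$ and splicing a weight-connection from $\alpha'$ to $\psi'$ through a root-connection inside $[\gamma]$ --- the paper writes out this concatenation explicitly in four cases according to whether $\alpha'+\gamma'$ or $\psi'+\gamma''$ vanishes, which is exactly the sign-flip bookkeeping you defer. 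Incidentally, the reduction from $\mathscr{A}_{[\alpha]}I_{[\gamma]}\neq 0$ to a pure weight--root product, which you flag as the main obstacle, is simply asserted without proof in the paper, so your plan is, if anything, more explicit about that point.
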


\begin{proof}
First we prove the existence. Given $[\gamma] \in \Gamma/\sim,$ let us suppose that $AI_{[\gamma]} = 0$. Since $I_{[\gamma]}$ is an ideal it follows $$\hbox{$[I_{[\gamma]},AL] = [I_{[\gamma]}, \bigoplus\limits_{\xi \in \Gamma / \sim} AI_{[\xi]}] = [I_{[\gamma]},AI_{[\gamma]}] = 0.$}$$ By hypothesis $AL = L,$ then $I_{[\gamma]} \subset \mathcal{Z}(L) = \{0\},$ a contradiction. Since $A =\bigoplus_{[\alpha] \in \Lambda/\approx} \mathscr{A}_{[\alpha]},$ there  exists $[\alpha] \in \Lambda/\approx$ such that $\mathscr{A}_{[\alpha]}I_{[\gamma]} \neq 0$.

Now we prove that $[\alpha]$ is unique. Suppose that $\beta$ is another weight of $A$ which satisfies ${\mathcal A}_{[\beta]}I_{[\gamma]} \neq 0$. From $\mathscr{A}_{[\alpha]}I_{[\gamma]} \neq 0$ and ${\mathcal A}_{[\beta]}I_{[\gamma]} \neq 0$ we can take $\alpha' \in {[\alpha]}, \beta' \in {[\beta]}$ and $\gamma',\gamma'' \in {[\gamma]}$ such that $A_{\alpha'}L_{\gamma'} \neq 0$ and $A_{\beta'}L_{\gamma''} \neq 0$. Since $\gamma',\gamma'' \in {[\gamma]}$, we can fix a connection $$\{\gamma', \zeta_2,\ldots,\zeta_n\},$$ from $\gamma'$ to $\gamma''$.

We have to distinguish four cases. First, $\alpha'+\gamma' \neq 0$ and  $\beta'+\gamma'' \neq 0$. Then $\alpha'+\gamma'$,
$\beta'+\gamma'' \in \Gamma$, and so $\alpha'$ is connected to $\beta'.$ Indeed, in the case $\gamma'+ \zeta_2 + \cdots +\zeta_n = \gamma'',$ the connection from $\alpha'$ to $\beta'$ is $$\{\alpha',\gamma', -\alpha',\zeta_2,\ldots,\zeta_n,\beta',-\gamma''\} \subset \pm \Lambda \cup \pm \Gamma.$$ While in the case $\gamma'+ \zeta_2 + \cdots +\zeta_n= -\gamma''$ the connection is $$\{\alpha',\gamma', -\alpha',\zeta_2,\ldots,\zeta_n, -\beta',\gamma''\} \subset \pm \Lambda \cup \pm \Gamma.$$ From here $\alpha' \approx \beta'$ and so $[\alpha]=[\beta]$. In the second case,  $\alpha'+\gamma' = 0$ and  $\beta'+\gamma'' \neq 0$. Hence   $\alpha'=-\gamma'$,
$\beta'+\gamma'' \in \Gamma$ and then  $$\{-\gamma',-\zeta_2,\ldots,-\zeta_n,-\beta', \gamma''\} \subset \pm \Lambda \cup \pm \Gamma$$ is a connection from $\alpha'$ to $\beta'$ in the case $\gamma'+ \zeta_1+ \cdots +\zeta_n= \gamma''$ while $$\{-\gamma',-\zeta_2,\ldots,-\zeta_n,\beta',-\gamma''\} \subset \pm \Lambda \cup \pm \Gamma$$ is a connection in the case $\gamma'+ \zeta_1+ \cdots +\zeta_n= -\gamma''$. From here, $[\alpha]=[\beta]$. In the third case we suppose  $\alpha'+\gamma' \neq 0$ and
$\beta'+\gamma''=0$. We can argue as in the previous case to get
$[\alpha]=[\beta]$. Finally, in the fourth case we consider
$\alpha'+\gamma'=0, \beta'+\gamma'' = 0$. Hence
$\alpha'=-\gamma', \beta'=-\gamma''.$ Then
$$\{-\gamma',-\zeta_2,\ldots,-\zeta_n\}$$ is a connection
between $\alpha'$ and $\beta'$ which implies $[\alpha]=[\beta]$. We conclude $[\alpha] \in \Lambda/\approx$ is the unique element in $\Lambda/\approx$  such that $\mathscr{A}_{[\alpha]}I_{[\gamma]} \neq 0$ for the given $[\gamma] \in \Gamma/\sim$.
\end{proof}

  Observe that the  above proposition shows that $I_{[\gamma]}$ is an $\mathscr{A}_{[\alpha]}$-module. Hence we can assert the following result.

\begin{theorem}\label{lema_final}
Let $(L,A)$ be a tight split Lie-Rinehart algebra. Then $$\hbox{$L =\bigoplus\limits_{i\in I}L_i$ \hspace{0.4cm} and \hspace{0.4cm} $A = \bigoplus\limits_{j \in J}A_j$}$$ with any $L_i$ a nonzero ideal of $L$ and any $A_j$ a nonzero ideal of $A$,  in such a way that for any $i \in I$ there exists a unique $\tilde{i} \in J$ such that $$A_{\tilde{i}}L_i \neq 0.$$
\end{theorem}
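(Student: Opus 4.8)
The plan is to read the statement off the results already proved: Corollaries \ref{coro-1} and \ref{coro1} supply the two direct-sum decompositions, while the Proposition immediately preceding the statement supplies the assignment $i \mapsto \tilde{i}$ together with its uniqueness. So the proof is essentially a matter of unpacking the definition of tightness and relabelling.

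First I would unpack the hypothesis that $(L,A)$ is tight. By Definition \ref{tight} this gives in particular ${\mathcal Z}(L) = {\mathcal Z}(A) = 0$ together with the two identities $H = \bigl(\sum_{\gamma \in \Gamma, -\gamma \in \Lambda}A_{-\gamma}L_{\gamma}\bigr) + \bigl(\sum_{\gamma \in \Gamma}[L_{-\gamma}, L_{\gamma}]\bigr)$ and $A_0 = \bigl(\sum_{-\alpha \in \Gamma, \alpha \in \Lambda}\rho(L_{-\alpha})(A_{\alpha})\bigr) + \bigl(\sum_{\alpha \in \Lambda}A_{-\alpha}A_{\alpha}\bigr)$. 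The first identity together with ${\mathcal Z}(L)=0$ is exactly the hypothesis of Corollary \ref{coro-1}, so $L = \bigoplus_{[\gamma] \in \Gamma/\sim} I_{[\gamma]}$, where by Theorem \ref{teo-1}-i) each $I_{[\gamma]}$ is an ideal of $L$ and $[I_{[\gamma]},I_{[\delta]}] = 0$ for $[\gamma]\neq[\delta]$. Symmetrically, the second identity together with ${\mathcal Z}(A)=0$ is the hypothesis of Corollary \ref{coro1}, so $A = \bigoplus_{[\alpha] \in \Lambda/\approx} \mathscr{A}_{[\alpha]}$, where by Theorem \ref{teo-11}-i) each $\mathscr{A}_{[\alpha]}$ is an ideal of $A$ and $\mathscr{A}_{[\alpha]}\mathscr{A}_{[\psi]}=0$ for $[\alpha]\neq[\psi]$. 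Each summand is nonzero: $0 \neq L_{\gamma} \subset I_{[\gamma]}$ by the definition of $\Gamma$, and $0 \neq A_{\alpha} \subset \mathscr{A}_{[\alpha]}$ by the definition of $\Lambda$. I would then set $I := \Gamma/\sim$, $J := \Lambda/\approx$, and $L_i := I_{[\gamma]}$, $A_j := \mathscr{A}_{[\alpha]}$ for $i = [\gamma]$, $j = [\alpha]$; since $I_{[\gamma]}$ and $\mathscr{A}_{[\alpha]}$ depend only on the equivalence class of the representative, this labelling is unambiguous.

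Finally I would invoke the Proposition immediately preceding the statement: it asserts that for every $[\gamma] \in \Gamma/\sim$ there is a unique $[\alpha]\in\Lambda/\approx$ with $\mathscr{A}_{[\alpha]}I_{[\gamma]}\neq 0$. In the new notation this says precisely that for every $i\in I$ there is a unique $\tilde{i} \in J$ with $A_{\tilde{i}}L_i \neq 0$, which is the claim. Since essentially all the content has already been absorbed into the cited corollaries and proposition, no genuine obstacle remains here; the only step that needs an explicit word is the nonvanishing of the summands, and that is immediate from the definitions of the roots system $\Gamma$ and the weights system $\Lambda$. (If desired one can append the remark, already noted in the text, that Proposition \ref{pro-2}-ii) gives $AI_{[\gamma]}\subset I_{[\gamma]}$, so that uniqueness forces $\mathscr{A}_{[\alpha]}I_{[\gamma]} = AI_{[\gamma]} \subset I_{[\gamma]}$ and hence each $L_i$ is an $A_{\tilde{i}}$-submodule; but this is not needed for the statement as phrased.)
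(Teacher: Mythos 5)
Your proposal is correct and follows the same route as the paper: the text obtains Theorem \ref{lema_final} exactly by combining Corollaries \ref{coro-1} and \ref{coro1} (whose hypotheses are part of tightness) with the preceding Proposition giving existence and uniqueness of $[\alpha]$ with $\mathscr{A}_{[\alpha]}I_{[\gamma]}\neq 0$. Your explicit remarks on the nonvanishing of the summands and the well-definedness of the labelling are harmless additions to what the paper leaves implicit.
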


\medskip

\section{Decompositions through the families of the  simple ideals}

In this section we are going to show that, under mild conditions, the decomposition of a  split Lie-Rinehart algebra $(L,A)$ given in Theorem  \ref{lema_final} can be obtained by means of the families of the  simple ideals of $L$ and $A$. In this section we always suppose that $\Gamma$ and $\Lambda$ are {\it symmetric}, that is, $\Gamma = -\Gamma$ and $\Lambda = -\Lambda$, respectively.



\noindent Let us introduce the concepts of root-multiplicativity and maximal length in the framework of split Lie-Rinehart algebras, in a similar way to the ones for other classes of split algebras, such as split Lie algebras, split Malcev algebras, split Leibniz algebras and split Hom-algebras (see \cite{YoLie,Yopoisson,AMSi,Cao8} for these notions and examples).

\begin{definition}\rm
We say that a split Lie-Rinehart algebra $(L,A)$ is {\it
root-multiplicative} if for any  $\gamma, \delta \in \Gamma$ and $\alpha, \beta \in \Lambda$  the following conditions hold.
\begin{itemize}
\item If $\gamma+\delta \in \Gamma$ then $[L_{\gamma}, L_{\delta}] \neq 0.$

\item If $\alpha + \gamma \in \Gamma$ then $A_{\alpha}L_{\gamma} \neq 0.$

\item If $\alpha + \beta  \in \Lambda$ then $A_{\alpha}A_{\beta} \neq 0.$
\end{itemize}
\end{definition}

\begin{definition}\rm
A split Lie-Rinehart algebra $(L,A)$ is called of {\it maximal length} if $\dim L_{\gamma} = \dim A_{\alpha}=1$ for any $\gamma \in \Gamma$ and $\alpha \in \Lambda$.
\end{definition}

\noindent Observe that if $L$ and $A$ are  simple algebras  then $\mathcal{Z}(L) = \mathcal{Z}(A) =\{0\}$. Also as consequence of  Theorem \ref{teo-1}-ii)  and Theorem   \ref{teo-11}-ii) we get that all of the nonzero roots in $\Gamma$ are connected, that all of the nonzero weights in $\Lambda$ are also connected and that $H = \bigl(\sum_{\gamma \in \Lambda \cap \Gamma}A_{-\gamma}L_{\gamma}\bigr) + \bigl(\sum_{\gamma \in \Gamma}[L_{\gamma},L_{-\gamma}]\bigr)$ and $A_0 = \bigl(\sum_{-\alpha \in \Gamma, \alpha \in \Lambda}\rho(L_{-\alpha})(A_{\alpha})\bigr) + \bigl(\sum_{\alpha \in \Lambda}A_{-\alpha}A_{\alpha}\bigr).$ From here, the conditions for $(L,A)$ of being tight (see Definition \ref{tight}) together with the ones of having $\Gamma$ and $\Lambda$  all of their elements connected,  are  necessary conditions to get a characterization of the simplicity of the algebras $L$ and $A$. Actually, under the hypothesis of being $(L,A)$ of maximal length and root-multiplicative, these are also sufficient conditions as Theorem \ref{teo100} shows.


\begin{proposition}\label{teo-3}
Let $(L,A)$ be a tight split Lie-Rinehart algebra of maximal length, root-multiplicative and all its nonzero roots are connected. Then either $L$ is simple or $L = I \oplus I'$ where $I$ and $I'$ are simple ideals of $L$.
\end{proposition}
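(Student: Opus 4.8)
The plan is to analyze the ideal structure of $L$ using the decomposition machinery already developed. Since $L$ is tight, Corollary \ref{coro-1} gives $L = \bigoplus_{[\gamma] \in \Gamma/\sim} I_{[\gamma]}$; but by hypothesis all nonzero roots are connected, so there is a single class and $L = I_{[\gamma]}$ for any fixed $\gamma \in \Gamma$. Thus $L = L_{0} \oplus (\bigoplus_{\gamma \in \Gamma} L_{\gamma})$ with $L_{0} = H = (\sum_{\gamma \in \Gamma, -\gamma \in \Lambda} A_{-\gamma}L_{\gamma}) + (\sum_{\gamma \in \Gamma}[L_{-\gamma},L_{\gamma}])$. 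The strategy is then: take a nonzero ideal $I$ of $L$ with $I \neq \mathrm{Ker}\,\rho$ and $I \neq L$, and show that the existence of such an $I$ forces a splitting $L = I \oplus I'$ with both pieces simple.

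First I would show that any nonzero ideal $I$ of $L$ inherits a root-space decomposition: writing $I = (I \cap H) \oplus (\bigoplus_{\gamma \in \Gamma}(I \cap L_{\gamma}))$, which holds because $I$ is stable under $\mathrm{ad}(H)$. By maximal length, $I \cap L_{\gamma}$ is either $0$ or all of $L_{\gamma}$, so $I$ determines a subset $\Gamma_{I} := \{\gamma \in \Gamma : L_{\gamma} \subset I\}$. The key claim is that $I$ contains $L_{\gamma}$ for some $\gamma \in \Gamma$, i.e. $\Gamma_{I} \neq \emptyset$: if not, then $I \subset H$, but then for any $\gamma \in \Gamma$ and $h \in I$ we have $[h, L_{\gamma}] \subset I \cap L_{\gamma} = 0$, so $h$ acts trivially by brackets; combined with Equation \eqref{fundamental} and $AL = L$ one gets $\rho(h) = 0$, hence $h \in \mathcal{Z}(L) = 0$, contradicting $I \neq 0$ (one must be slightly careful here and use tightness to handle the $\rho$ part). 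So some $L_{\gamma} \subset I$. Next, using root-multiplicativity and the connectedness of all roots, I would propagate: if $L_{\gamma} \subset I$ and $\gamma + \delta \in \Gamma$ with $\delta \in \pm\Gamma$, then $[L_{\gamma}, L_{\delta}] = L_{\gamma+\delta}$ (nonzero by root-multiplicativity, equal to the whole space by maximal length) lies in $I$; similarly for $\alpha + \gamma \in \Gamma$ with $\alpha \in \pm\Lambda$, $A_{\alpha}L_{\gamma} = L_{\alpha+\gamma} \subset I$. Hence $\Gamma_{I}$ is "closed under connection steps" in the appropriate sense, and since all roots are connected, one gets that $\Gamma_{I}$ together with its "antipode-related" partner exhausts $\Gamma$ in a controlled way.

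The decomposition then comes from partitioning $\Gamma$ according to $I$. Concretely, set $\Gamma_{I}$ as above and $\Gamma^{I} := \{\gamma \in \Gamma : L_{\gamma} \not\subset I\}$; the propagation argument shows that if $\gamma \in \Gamma_{I}$ and $\delta \in \Gamma^{I}$ then $\gamma + \delta \notin \Gamma$ (otherwise $L_{\delta} = [L_{-\gamma}, L_{\gamma+\delta}] \subset I$, using $-\gamma \in \Gamma_I$ by symmetry — here I use $\Gamma = -\Gamma$), and likewise no weight connects the two sets. Defining $I' := H' \oplus (\bigoplus_{\gamma \in \Gamma^{I}} L_{\gamma})$ for a suitable complement $H'$ of $I \cap H$ in $H$, one checks $I'$ is an ideal, $[I, I'] = 0$, $A I' \subset I'$, and $L = I \oplus I'$, with $H = (I \cap H) \oplus H'$ forced by the formula for $H$ in terms of the $A_{-\gamma}L_{\gamma}$ and $[L_{-\gamma},L_{\gamma}]$ (each such summand lands entirely in $I$ or entirely in $I'$). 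Finally, simplicity of $I$ and $I'$: an ideal of $I$ is an ideal of $L$ contained in $I$, so by minimality of the construction (any such nonzero ideal would again contain some $L_{\gamma}$ and then, by root-multiplicativity within $I$, all of $I$, modulo the kernel-of-$\rho$ caveat), $I$ is simple, and symmetrically for $I'$; if no proper ideal $I$ exists in the first place then $L$ itself is simple.

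The main obstacle I expect is the careful bookkeeping around $\mathrm{Ker}\,\rho$: the definition of simplicity allows $\mathrm{Ker}\,\rho$ as an ideal, so "$L$ has only the ideals $0, \mathrm{Ker}\,\rho, L$" is the target rather than literal minimality, and one must verify that the ideal $I$ produced is not $\mathrm{Ker}\,\rho$ or else absorb that case. Handling the $\rho$-component of the ideal condition \eqref{cond_ideal} and of the centre-freeness (showing $I \not\subset H$ really does use $\mathcal{Z}(L) = 0$ plus $AL = L$ plus Equation \eqref{fundamental}) is the delicate part; the rest is a connectedness-plus-maximal-length propagation argument of a type already standard in the cited literature \cite{YoLie}.
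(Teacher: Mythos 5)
Your overall strategy (decompose an ideal $I$ into root spaces, get $\Gamma_I=\{\gamma:L_\gamma\subset I\}$, propagate along connections using root-multiplicativity and maximal length, then build a complementary ideal) is the same as the paper's, but there is a genuine gap at the decisive step. You justify the key disjointness claim ``$\gamma\in\Gamma_I$, $\delta\in\Gamma^I$ $\Rightarrow$ $\gamma+\delta\notin\Gamma$'' by writing $L_{\delta}=[L_{-\gamma},L_{\gamma+\delta}]\subset I$, ``using $-\gamma\in\Gamma_I$ by symmetry.'' Symmetry of the root system gives only $-\gamma\in\Gamma$, not $-\gamma\in\Gamma_I$: the set $\Gamma_I$ need not be symmetric, and in the genuinely non-simple case it cannot be. Indeed, if $-\gamma\in\Gamma_I$ for some $\gamma\in\Gamma_I$, then (as in the paper's first case) propagation along connections together with the tightness formula for $H$ forces $I=L$; so if your symmetry claim were available for every ideal, your argument would ``prove'' that $L$ is always simple, contradicting the second alternative of the proposition. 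The paper's proof hinges precisely on the dichotomy you skip: either some $\gamma\in\Gamma_I$ has $-\gamma\in\Gamma_I$ (then $I=L$), or $\Gamma_I\cap(-\Gamma_I)=\emptyset$, in which case the connection argument yields $\Gamma=\Gamma_I\,\dot\cup\,(-\Gamma_I)$ as in \eqref{eq50} and one must construct $I'$ explicitly as $\bigl(\sum_{-\gamma\in-\Gamma_I,\,\gamma\in\Lambda}A_{\gamma}L_{-\gamma}\bigr)\oplus\bigl(\bigoplus_{-\gamma\in-\Gamma_I}L_{-\gamma}\bigr)$ and verify $[L,I']\subset I'$ and $AI'\subset I'$ by a case analysis that uses the ideal property of $I$, maximal length and root-multiplicativity; the orthogonality $[I,I']=0$ (your disjointness claim) is an output of that verification, not an input.

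A related soft spot is your definition of $I'$ as $H'\oplus\bigoplus_{\gamma\in\Gamma^I}L_\gamma$ for ``a suitable complement $H'$ of $I\cap H$ in $H$'': in the non-simple case the paper shows that $\sum_{\gamma\in\Gamma}[L_{\gamma},L_{-\gamma}]=0$ (a consequence of $[I,I']=0$ plus tightness), so the $H$-part of $I'$ is forced to be $\sum_{-\gamma\in-\Gamma_I,\,\gamma\in\Lambda}A_{\gamma}L_{-\gamma}$, and the directness of $H=(I\cap H)\oplus(I'\cap H)$ is proved via ${\mathcal Z}(L)=0$; an arbitrary linear complement would not automatically be $\rho$-compatible or $A$-stable, and the Lie--Rinehart ideal condition \eqref{cond_ideal} must be checked, not just the Lie one. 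Your closing remark that ``an ideal of $I$ is an ideal of $L$ contained in $I$'' also needs the Rinehart condition verified; the paper avoids this by rerunning the first-case argument inside $I$ and $I'$. The initial step ($I\not\subset H$ via ${\mathcal Z}(L)=0$, with your remark on handling $\rho$ through \eqref{fundamental} and $AL=L$) is fine and matches the paper.
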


\begin{proof}
Consider $I$ a nonzero ideal of $L$. In case $I \subset H$, on the one hand $[I,H] \subset [H,H] = 0,$ and on the other hand $[I, \bigoplus_{\gamma \in {\Gamma}} L_{\gamma}] \subset (\bigoplus_{\gamma \in {\Gamma}} L_{\gamma}) \cap H = 0.$ So $I \subset \mathcal{Z}(L) = \{0\},$ a contradiction. Then $I \not\subset H$ and by \cite[Lemma 3.2]{YoLie} we can write $$I = (I \cap H) \oplus (\bigoplus_{\gamma \in \Gamma}(I \cap L_{\gamma}))$$ with $(I \cap L_{\gamma}) \neq 0$ for at least one $\gamma \in \Gamma$. \black Let us  denote by $I_{\gamma} := I \cap L_{\gamma}$ and by $ \Gamma_I := \{\gamma \in \Gamma: I_{\gamma} \neq 0\}$. Then we can write $I = (I \cap H) \oplus (\bigoplus_{\gamma \in \Gamma_I}I_{\gamma}).$ Let us distinguish two cases.

In the first case assume there exists $\gamma \in \Gamma_I$ such that $-\gamma \in \Gamma_I$. Then $0 \neq I_{\gamma} \subset I$ and we can assert by the maximal length of $(L,A)$ that
\begin{equation}\label{I-1}
L_{\gamma} \subset I.
\end{equation}
Now,  take some $\delta \in \Gamma$ satisfying $\delta \notin \{\gamma,-\gamma\}$. Since the root $\gamma$ is connected to $\delta$, we have a connection $\{\zeta_1,\ldots,\zeta_n\} \subset \Lambda \cup \Gamma$ with $n \geq 2$, from $\gamma$ to $\delta$ satisfying:

\medskip

$\zeta_1 = \gamma,$

$\zeta_1 + \zeta_2 \in \Gamma,$

$\hspace{0.5cm} \vdots$

$\zeta_1 + \zeta_2 + \cdots + \zeta_{n-1} \in \Gamma,$

$\zeta_1 + \zeta_2 + \cdots + \zeta_n \in \{\delta,-\delta\}.$

\medskip

\noindent Taking into account $\zeta_1 \in \Gamma_I$ we have that if $\zeta_2 \in \Lambda$ (respectively, $\zeta_2 \in \Gamma$),  the root-multiplicativity and the maximal length of $L$ allow us to  assert that $$0 \neq A_{\zeta_2}L_{\zeta_1} = L_{\zeta_1 + \zeta_2} \hspace{0.4cm} (\hbox{respectively, } 0 \neq [L_{\zeta_1}, L_{\zeta_2}] = L_{\zeta_1 + \zeta_2}).$$ Since $0 \neq L_{\zeta_1} \subset I$, as consequence of Equation \eqref{I-1}, we get in both cases that $$0 \neq L_{\zeta_1+\zeta_2} \subset I.$$

\noindent A similar argument applied to $\zeta_1+\zeta_2 \in \Gamma, \zeta_3 \in \Lambda \cup \Gamma$ and $\zeta_1 + \zeta_2 + \zeta_3 \in \Gamma$ gives us $0 \neq L_{\zeta_1+\zeta_2+\zeta_3} \subset I.$ We can iterate this process with the connection $\{\zeta_1,\ldots,\zeta_n\}$ to get $$0 \neq L_{\zeta_1+\zeta_2+\cdots + \zeta_n} \subset I.$$

\noindent Thus we have shown that
\begin{equation}\label{either-1}
\mbox{for any } \delta \in \Gamma, \mbox{ we have that } 0 \neq L_{\epsilon_{\delta} \delta} \subset I \mbox{ for some } \epsilon_{\delta} \in \{1,-1\}.
\end{equation}

\noindent Since $-\gamma \in \Gamma_I$ then $\{-\zeta_1,\ldots,-\zeta_n\}$ is a connection from $-\gamma$ to $\delta$ satisfying $$-\zeta_1 - \zeta_2 - \cdots -\zeta_n = -\epsilon_{\delta} \delta.$$ By arguing as above we get,
\begin{equation}\label{I-III}
0 \neq L_{-\epsilon_{\delta} \delta} \subset I
\end{equation}
and so $\Gamma_I = \Gamma.$ From the fact $H = \bigl(\sum\limits_{\gamma \in \Lambda \cap \Gamma}A_{\gamma}L_{-\gamma}\bigr) + \bigl(\sum\limits_{\gamma \in \Gamma}[L_{\gamma},L_{-\gamma}]\bigr)$ we also have
\begin{equation}\label{equa-2}
H \subset I.
\end{equation}
From Equations \eqref{I-1}-\eqref{equa-2} we obtain $L \subset I,$ and so $L$ is simple.

\medskip

In the second case, suppose that for any $\gamma \in \Gamma_I$ we have that $-\gamma \notin \Gamma_I.$ Observe that by arguing as in the previous case  we can write
\begin{equation}\label{eq50}
\Gamma = \Gamma_I \dot{\cup} -\Gamma_I
\end{equation}
where $-\Gamma_I := \{-\gamma: \gamma \in \Gamma_I\}.$ Let us denote by $$I' := (\sum\limits_{-\gamma \in -\Gamma_I, \gamma \in \Lambda}A_{\gamma}L_{-\gamma}) \oplus (\bigoplus\limits_{-\gamma \in -\Gamma_I}L_{-\gamma}).$$ Our next aim is to show that $I'$ is an ideal of $L$. Let us prove that $I'$ is a Lie ideal of $L.$ Since $H$ is abelian we have $$[L,I'] = \Bigl[H \oplus (\bigoplus\limits_{\delta \in \Gamma}L_{\delta}), \bigl(\sum\limits_{-\gamma \in -\Gamma_I,\gamma \in \Lambda}A_{\gamma}L_{-\gamma}\bigr) \oplus \bigl(\bigoplus\limits_{-\gamma \in -\Gamma_I}L_{-\gamma}\bigr)\Bigr] \subset$$
\begin{equation}\label{eq40}
(\bigoplus\limits_{-\gamma \in -\Gamma_I}L_{-\gamma}) + \Bigl[\bigoplus\limits_{\delta \in \Gamma}L_{\delta}, \bigl(\sum\limits_{-\gamma \in -\Gamma_I, \gamma \in \Lambda}A_{\gamma}L_{-\gamma}\bigr)\Bigr] + \Bigl[\bigoplus\limits_{\delta \in \Gamma}L_{\delta},\bigl(\bigoplus\limits_{-\gamma \in -\Gamma_I}L_{-\gamma}\bigr)\Bigr].
\end{equation}

Consider the second summand in Equation \eqref{eq40}. If some
$[L_{\delta},A_{\gamma}L_{-\gamma}] \neq 0$ we have that in case $\delta = -\gamma$,  clearly $[L_{-\gamma},A_{\gamma}L_{-\gamma}] \subset L_{-\gamma} \subset I',$ and that in case $\delta = \gamma$, since $I$ is an ideal $-\gamma \notin \Gamma_I$ implies $[L_{-\gamma},A_{-\gamma}L_{\gamma}] = 0,$ we get by maximal length and symmetry of $\Gamma$ that $[L_{\gamma},A_{\gamma}L_{-\gamma}] = 0$. Suppose $\delta \notin \{\gamma, -\gamma\}$. Then by Equation \eqref{fundamental} either $A_{\gamma}[L_{\delta},L_{-\gamma}] \neq 0$ or
$\rho(L_{\delta})(A_{\gamma})L_{-\gamma} \neq 0$ and, by the maximal length of $L$, either $A_{\gamma}[L_{\delta},L_{-\gamma}] = L_{\delta}$ or $\rho(L_{\delta})(A_{\gamma})L_{-\gamma} = L_{\delta}.$ In both cases, since $\gamma \in \Gamma_I$, we have by root-multiplicativity that $L_{-\delta} \subset I,$ that is, $-\delta \in \Gamma_I$. From here $\delta \in -\Gamma_I$ and then $L_{\delta} \subset I'$. Therefore
$$\Bigl[\bigoplus\limits_{\delta \in \Gamma}L_{\delta},\sum\limits_{-\gamma \in -\Gamma_I, \gamma \in \Lambda}A_{\gamma}L_{-\gamma}\Bigr] \subset I'.$$

\noindent Finally, if we consider the third summand in \eqref{eq40} and some $[L_{\delta},L_{-\gamma}] \neq 0$, we have $[L_{\delta}, L_{-\gamma}] = L_{-\gamma + \delta}$. Suppose $\delta \neq \gamma.$ Since $\gamma \in \Gamma_I$, the root-multiplicativity gives us $[L_{\gamma},L_{-\delta}] = L_{\gamma - \delta} \subset I$. Hence $-\gamma +\delta \in -\Gamma_I$ and then $L_{-\gamma + \delta} \subset I'$. Consider $\delta = \gamma,$ in case  $ [L_{\gamma},L_{-\gamma}]  \neq 0$ we have $[L_{\gamma},L_{-\gamma}]  \subset I$ since $\gamma \in \Gamma_I$. Then $L_{-\gamma} = [[L_{\gamma},L_{-\gamma}],L_{-\gamma}] \subset I.$ From here  $\gamma,-\gamma \in \Gamma_I,$ a contradiction with \eqref{eq50}. Thus $[\bigoplus\limits_{\delta \in \Gamma}L_{\delta},\bigoplus\limits_{-\gamma \in -\Gamma_I}L_{-\gamma}] \subset I'$ and $I'$ is a Lie ideal of $L$.

Let us check $AI' \subset I'. $ We have $$AI' = \bigl(A_0 \oplus \bigoplus\limits_{\alpha \in \Lambda}A_{\alpha}\bigr)\Bigl(\bigl(\sum\limits_{-\gamma \in -\Gamma_I, \gamma \in \Lambda}A_{\gamma}L_{-\gamma}\bigr) \oplus \bigl(\bigoplus\limits_{-\gamma \in -\Gamma_I}L_{-\gamma}\bigr)\Bigr) \subset $$
\begin{equation}\label{eq400}
I' + (\bigoplus\limits_{\alpha \in \Lambda}A_{\alpha})\bigl(\sum\limits_{-\gamma \in -\Gamma_I, \gamma \in \Lambda}A_{\gamma}L_{-\gamma}\bigr) + (\bigoplus\limits_{\alpha \in \Lambda}A_{\alpha})\bigl(\bigoplus\limits_{-\gamma \in -\Gamma_I}L_{-\gamma}\bigr).
\end{equation}

Consider the third summand in \eqref{eq400} and suppose that $A_{\alpha}L_{-\gamma} \neq 0$ for certain $\alpha \in \Lambda, -\gamma \in -\Gamma_I.$ In case $\alpha - \gamma \in \Gamma_I,$ we have by the  root-multiplicativity of $(L,A)$ that  $A_{-\alpha}L_{\gamma}\neq 0.$ Now by the maximal length of $L$ and the fact $\gamma \in \Gamma_I$, we get $A_{-\alpha}L_{\gamma} = L_{-\alpha+\gamma} \subset I.$ Therefore $-\alpha + \gamma \in \Gamma_I$ a contradiction. Hence  $\alpha - \gamma \in -\Gamma_I$.

We can argue as above with  the second summand in \eqref{eq400} so as to  conclude that $I'$ is an ideal of the split Lie-Rinehart algebra $(L,A)$.

Now since $[I',I] = 0$ it follows $\sum\limits_{\gamma \in \Gamma}[L_{\gamma},L_{-\gamma}] = 0,$ so by hypothesys must be $$H = \bigl(\sum\limits_{\gamma \in \Gamma_I, -\gamma \in \Lambda}A_{-\gamma}L_{\gamma}\bigr) \oplus \bigl(\sum\limits_{-\gamma \in -\Gamma_I, \gamma \in \Lambda}A_{\gamma}L_{-\gamma}\bigr).$$ Indeed, the sum is direct because if there exists $$0 \neq h \in \bigl(\sum\limits_{\gamma \in \Gamma_I, -\gamma \in \Lambda}A_{-\gamma}L_{\gamma}\bigr) \cap \bigl(\sum\limits_{-\gamma \in -\Gamma_I, \gamma \in \Lambda}A_{\gamma}L_{-\gamma}\bigr),$$  taking into account ${\mathcal Z}(L)=\{0\}$ and $L$ is split, there exists $0 \neq v_{\gamma'} \in L_{\gamma'}, \gamma' \in \Gamma$, such that $[h,v_{\gamma'}] \neq 0$, being then $L_{\gamma'} \subset I \cap I' = 0$, a contradiction. Hence $h \in \mathcal{Z}(L) = \{0\}$ and the sum is direct. Taking into account the above observation and Equation (\ref{eq50}) we have $$L = I \oplus I'.$$

\noindent  Finally, we can proceed with $I$ and $I'$ as we did for $L$ in the first case of the proof to conclude that $I$ and $I'$ are simple ideals, which completes the proof of the theorem.
\end{proof}

\noindent In a similar way to Proposition \ref{teo-3} we can prove the next result.

\begin{proposition}\label{teo-33}
Let $(L,A)$ be a tight split Lie-Rinehart algebra of maximal length, root-multiplicative and all its nonzero weights are connected.  Then either $A$ is simple or $A = J \oplus J'$ where $J$ and $J'$ are simple ideals of $A$.
\end{proposition}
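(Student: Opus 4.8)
The plan is to mirror the proof of Proposition \ref{teo-3}, exchanging the roles of roots and weights and replacing Lie brackets in $L$ by products in $A$. So let $J$ be a nonzero ideal of $A$. First I would dispose of the case $J \subset A_0$: if $J \subset A_0$, then $JA_0 \subset A_0 A_0$ while $J\bigl(\bigoplus_{\alpha \in \Lambda}A_{\alpha}\bigr)$ lands in $\bigl(\bigoplus_{\alpha \in \Lambda}A_{\alpha}\bigr) \cap A_0 = 0$ by Lemma \ref{lema-1}-iii); since $(L,A)$ is tight we have $A_0 = \bigl(\sum_{-\alpha \in \Gamma,\,\alpha \in \Lambda}\rho(L_{-\alpha})(A_{\alpha})\bigr) + \bigl(\sum_{\alpha \in \Lambda}A_{-\alpha}A_{\alpha}\bigr)$ and also $AA = A$, $\mathcal{Z}(A) = 0$, which should force $J = 0$ after an argument analogous to the one in the first paragraph of the proof of Proposition \ref{teo-3} (using that $a \in J \cap A_0$ then multiplies the whole root-space decomposition of $A$ trivially). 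Hence $J \not\subset A_0$, and the weight-space analogue of \cite[Lemma 3.2]{YoLie} lets me write $J = (J \cap A_0) \oplus \bigl(\bigoplus_{\alpha \in \Lambda}(J \cap A_{\alpha})\bigr)$ with $J_{\alpha} := J \cap A_{\alpha} \neq 0$ for at least one $\alpha$. Write $\Lambda_J := \{\alpha \in \Lambda : J_{\alpha} \neq 0\}$.

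Next I split into two cases exactly as before. \textbf{Case 1:} there is $\alpha \in \Lambda_J$ with $-\alpha \in \Lambda_J$. By maximal length $A_{\alpha} \subset J$, and for any $\psi \in \Lambda$ with $\psi \notin \{\alpha,-\alpha\}$ the connectedness of all weights (hypothesis) gives a connection $\{\sigma_1,\ldots,\sigma_n\} \subset \Lambda \cup \Gamma$ from $\alpha$ to $\psi$. Walking along the connection, at each step $\sigma_1 + \cdots + \sigma_k \in \Lambda$ I use root-multiplicativity and maximal length: if $\sigma_{k+1} \in \Lambda$ then $A_{\sigma_{k+1}}A_{\sigma_1+\cdots+\sigma_k} = A_{\sigma_1+\cdots+\sigma_{k+1}}$ is nonzero, and if $\sigma_{k+1} \in \Gamma$ then $A_{\sigma_1+\cdots+\sigma_k}L_{\sigma_{k+1}} = A_{\sigma_1+\cdots+\sigma_{k+1}}$ is nonzero; since $J$ is an ideal of $A$ that also absorbs $\rho(L)(A)L$ — wait, $J$ is merely an ideal of the algebra $A$, so $A_{\sigma_1+\cdots+\sigma_k}L_{\sigma_{k+1}} \subset J$ needs that $AJ \subset J$ only, which holds; and for the $\Gamma$-step one must instead use that $A_{\sigma_1 + \cdots + \sigma_{k+1}} = \rho(L_{\sigma_{k+1}})(A_{\sigma_1+\cdots+\sigma_k})$ is also of the form $A$ acting, hmm — more carefully, since $A_{\sigma_1+\cdots+\sigma_k} \subset J$ and $J$ is an ideal, $A_{\sigma_1+\cdots+\sigma_{k+1}} = A_{\sigma_{k+1}}A_{\sigma_1+\cdots+\sigma_k} \subset A J \subset J$ in the $\Lambda$-step, and in the $\Gamma$-step one uses $A_{\sigma_1+\cdots+\sigma_{k+1}} = A_{\sigma_1+\cdots+\sigma_k}L_{\sigma_{k+1}}$, which by Lemma \ref{lema-1}-iv) lies in $L$, not $A$ — so actually the relevant product keeping us inside $J$ in a $\Gamma$-step is that $A_{\sigma_1+\cdots+\sigma_k}A_{\sigma_{k+1}}$ only makes sense when $\sigma_{k+1}\in\Lambda$. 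I would therefore observe that going to $A_{\sigma_1+\cdots+\sigma_{k+1}}$ when $\sigma_{k+1}\in\Gamma$ uses the identity $A_{\sigma_1+\cdots+\sigma_{k+1}} = \rho(L_{-\sigma_{k+1}})(A_{\sigma_1+\cdots+\sigma_{k+1}})$ only if $-\sigma_{k+1}\in\Gamma$; using symmetry of $\Gamma$, indeed $-\sigma_{k+1}\in\Gamma$, and root-multiplicativity together with maximal length gives $\rho(L_{-\sigma_{k+1}})(A_{\sigma_1+\cdots+\sigma_k+\sigma_{k+1}}) = A_{\sigma_1+\cdots+\sigma_k}$, i.e. running the derivation argument backwards, which shows $A_{\sigma_1+\cdots+\sigma_{k+1}}$ maps onto a nonzero subspace of $J$ and hence (dimension one) equals a subspace of $J$. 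Conclusion: $A_{\psi} \subset J$ for every $\psi$, and using symmetry of $\Lambda$ the same for $-\psi$, so $\Lambda_J = \Lambda$; then the tightness identity for $A_0$ gives $A_0 \subset J$, whence $J = A$ and $A$ is simple.

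\textbf{Case 2:} for every $\alpha \in \Lambda_J$ we have $-\alpha \notin \Lambda_J$. As in Proposition \ref{teo-3} one first shows $\Lambda = \Lambda_J \,\dot\cup\, (-\Lambda_J)$, then defines the candidate complement $$J' := \bigl(\sum_{-\alpha \in -\Lambda_J,\ \alpha \in \Gamma}\rho(L_{\alpha})(A_{-\alpha})\bigr) + \bigl(\sum_{-\alpha \in -\Lambda_J,\ \alpha \in \Lambda}A_{\alpha}A_{-\alpha}\bigr) \oplus \bigl(\bigoplus_{-\alpha \in -\Lambda_J}A_{-\alpha}\bigr),$$ and checks $AJ' \subset J'$ by running through the various products $A_{\psi}A_{-\alpha}$, $A_0 A_{-\alpha}$, $A_0\rho(L_\alpha)(A_{-\alpha})$, $A_\psi\rho(L_\alpha)(A_{-\alpha})$, using root-multiplicativity, maximal length, symmetry of $\Lambda$ and $\Gamma$, and the fact that $J$ is an ideal to see that whenever a product with something in $\Lambda_J$ would be nonzero one lands in $J$, forcing the other index back into $-\Lambda_J$; this is the routine but lengthy core, completely parallel to Equations \eqref{eq40}--\eqref{eq400}. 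Then $JJ' = 0$ kills $\sum_{\alpha}A_{-\alpha}A_{\alpha}$-type cross terms; combined with the tightness identity for $A_0$ one gets $A_0 = \bigl(\sum_{\alpha\in\Lambda_J,\,-\alpha\in\Gamma}\rho(L_{-\alpha})(A_\alpha)\bigr) \oplus (\text{the } J' \text{ part})$, the directness following from $\mathcal{Z}(A) = 0$ as in the original proof, and hence $A = J \oplus J'$. Finally one reruns Case 1 inside each of $J$ and $J'$ (they have all their nonzero weights connected since any two weights of $J$, being connected in $\Lambda$ by a connection, stay inside $\Lambda_J$ by the ideal property and the case-2 dichotomy) to conclude both are simple. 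The main obstacle is the bookkeeping in Case 2 — verifying $AJ' \subset J'$ term by term and proving the direct-sum decomposition of $A_0$ — together with the one genuinely new wrinkle relative to \cite{YoLie}: the derivation terms $\rho(L_{-\alpha})(A_\alpha)$ inside $A_0$, which must be handled using Lemma \ref{lema-1}-v) and the symmetry of $\Gamma$ wherever the Lie-algebra proof only had bracket terms.
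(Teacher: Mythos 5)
Your overall strategy (mirror the proof of Proposition \ref{teo-3}, exchanging roots for weights) is exactly what the paper intends, since the paper only remarks that Proposition \ref{teo-33} is proved ``in a similar way''; but as written your argument breaks precisely at the points where that similarity is not automatic, namely at the steps involving the anchor $\rho$. The most serious gap is your handling of a $\Gamma$-step in Case 1. The paper's root-multiplicativity has no clause asserting $\rho(L_{\gamma})(A_{\alpha})\neq 0$ when $\gamma+\alpha\in\Lambda$ (only $[L_{\gamma},L_{\delta}]$, $A_{\alpha}L_{\gamma}$ and $A_{\alpha}A_{\beta}$ are covered), so the identity $\rho(L_{-\sigma_{k+1}})(A_{\sigma_1+\cdots+\sigma_{k+1}})=A_{\sigma_1+\cdots+\sigma_k}$ that you invoke is simply not available; and even granting it, the conclusion is a non sequitur: $J$ is an ideal only for the associative product and is not $\rho(L)$-stable, so knowing that a derivation maps $A_{\sigma_1+\cdots+\sigma_{k+1}}$ onto a nonzero subspace of $J$ does not place $A_{\sigma_1+\cdots+\sigma_{k+1}}$ inside $J$. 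Moreover, Definition \ref{connection2} allows the partial sums $\sigma_1+\cdots+\sigma_k$ of a weight connection to lie in $\pm\Gamma$ rather than $\pm\Lambda$; at such a stage there is no weight space to propagate, the third clause of root-multiplicativity does not apply, and your walk (which always multiplies the current weight space by the next $A_{\sigma_{k+1}}$) has nothing to act on. Any correct proof must explain how the ideal $J$ is carried across these anchor stages; this is exactly the ``new wrinkle'' you flag at the end but do not resolve.

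The preliminary reduction $J\not\subset A_0$ is also only asserted. In the Lie case that step rests on $[H,H]=0$, whereas here $A_0A_0\neq 0$ in general: by tightness $A_0$ is spanned by the spaces $A_{-\alpha}A_{\alpha}$ together with the derivation terms $\rho(L_{-\alpha})(A_{\alpha})$. For $a\in J\subset A_0$ one does get $aA_{\alpha}=0$ and $a(A_{-\alpha}A_{\alpha})=0$, but the Leibniz rule gives $a\,\rho(v_{-\alpha})(b_{\alpha})=-\rho(v_{-\alpha})(a)\,b_{\alpha}\in A_{-\alpha}A_{\alpha}$, and the ideal property of $J$ gives no control over $\rho(v_{-\alpha})(a)$, so $aA_0=0$, hence $a\in\mathcal{Z}(A)$, does not follow ``as in the first paragraph of Proposition \ref{teo-3}''. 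Finally, Case 2 is only a sketch, and both the verification of $AJ'\subset J'$ and the directness of the decomposition of $A_0$ run into the same $\rho$-terms. So the proposal identifies the right template but does not close the steps that genuinely distinguish this statement from its Lie-algebra prototype in \cite{YoLie}; as it stands it is incomplete, and a complete argument would have to supply (or add as a hypothesis, e.g.\ a multiplicativity condition for the anchor) the missing mechanism for transporting an ideal of $A$ along the $\Gamma$-portions of a weight connection.
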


\noindent Finally, we can prove the following theorem.

\begin{theorem}\label{teo100}
Let $(L,A)$ be a tight split Lie-Rinehart algebra  of maximal length, root multiplicative, with symmetric roots and weights systems in such a way that $\Gamma$ have all its nonzero roots connected and $\Lambda$ have all its nonzero weights connected. Then $$\hbox{$L =\bigoplus\limits_{i\in I}L_i$ \hspace{0.4cm} and \hspace{0.4cm} $A = \bigoplus\limits_{j \in J}A_j$}$$ where any $L_i$ is a simple  ideal of $L$  having all of
 its nonzero roots connected  and such that  $[L_i,L_k]=0$ for any $k \in I$ with $i \neq k$;  and any   $A_j$ is  a simple   ideal  of $A$ satisfying  $A_jA_h=0$ for any $h \in J$  such that $j \neq h.$ Furthermore, for any $i \in I$ there exists a unique $\tilde{i} \in J$ such that $$A_{\tilde{i}}L_i \neq 0.$$
  We also have that  any $L_i$ is a split Lie-Rinehart algebra over $A_{\tilde{i}}$.
\end{theorem}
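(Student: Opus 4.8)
The plan is to refine the decomposition of Theorem \ref{lema_final} --- which here, since all nonzero roots of $L$ and all nonzero weights of $A$ are connected, is trivial --- into a decomposition by \emph{simple} ideals, and then to match the simple pieces of $L$ with those of $A$. First I would apply Propositions \ref{teo-3} and \ref{teo-33}: under the present hypotheses they yield that $L$ is either simple or a direct sum $L_1\oplus L_2$ of two simple ideals, and likewise $A$ is simple or a direct sum $A_1\oplus A_2$ of two simple ideals. Write $L=\bigoplus_{i\in I}L_i$, $A=\bigoplus_{j\in J}A_j$ with $|I|,|J|\le 2$, each $L_i$ a simple ideal of $(L,A)$ over $A$ and each $A_j$ a simple ideal of $A$. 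The orthogonality relations are then immediate, since for ideals inside a direct sum $[L_i,L_k]\subset L_i\cap L_k=0$ and $A_jA_h\subset A_j\cap A_h=0$ whenever $i\ne k$, $j\ne h$.

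The heart of the argument is to show that the $A$-action is ``diagonal'' for these decompositions and that exactly one $A_j$ acts nontrivially on each $L_i$. First I would establish $\rho(L)(A_j)\subset A_j$ for every $j$: for $a_j\in A_j$ and $v\in L$ write $\rho(v)(a_j)=b_j+b_h$ with $b_j\in A_j$, $b_h\in A_h$, where $h$ is the other index (if any), so that $A=A_j\oplus A_h$; for $c_h\in A_h$, since $c_ha_j\in A_hA_j=0$, the Leibniz rule gives $c_hb_h=c_h\rho(v)(a_j)=-\rho(v)(c_h)a_j\in A_j$, whence $c_hb_h\in A_j\cap A_h=0$; thus $A_hb_h=0$, and since also $b_hA_j=0$ we get $b_hA=0$, i.e. $b_h\in{\mathcal Z}(A)=0$. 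With $\rho(L)(A_j)\subset A_j$ available, one checks using \eqref{fundamental} that $A_jL_i$ is an ideal of $(L,A)$ contained in $L_i$, hence an ideal of the simple Lie-Rinehart algebra $(L_i,A)$; therefore $A_jL_i\in\{0,L_i,{\rm Ker}(\rho|_{L_i})\}$. Since $L_i$ is an ideal and $AL=L$ we have $AL_i=L_i$, so $\sum_jA_jL_i=L_i\ne 0$ and $A_jL_i\ne 0$ for some $j$. If $A_{j_1}L_i\ne 0\ne A_{j_2}L_i$ with $j_1\ne j_2$, then $A_{j_2}(A_{j_1}L_i)=(A_{j_2}A_{j_1})L_i=0$, and using that $A_{j_2}A_{j_2}=A_{j_2}$ (as $A_{j_2}$ is simple) a short case analysis on whether $A_{j_1}L_i$, $A_{j_2}L_i$ equal $L_i$ or ${\rm Ker}(\rho|_{L_i})$ produces a contradiction. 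Hence there is a unique $\tilde i\in J$ with $A_{\tilde i}L_i\ne 0$; moreover $A_{\tilde i}L_i=L_i$ and $A_hL_i=0$ for $h\ne\tilde i$.

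It then remains to check that each $(L_i,A_{\tilde i})$ is a split Lie-Rinehart algebra with all its nonzero roots connected. The $A$-module structure of $L_i$ restricts to $A_{\tilde i}$ with $A_{\tilde i}L_i=L_i$, and $\rho(L_i)(A_{\tilde i})\subset A_{\tilde i}$, so $\rho|_{L_i}$ takes values in ${\rm Der}(A_{\tilde i})$ and all anchor identities are inherited; thus $(L_i,A_{\tilde i})$ is a Lie-Rinehart algebra. Put $H_i:=H\cap L_i$. Exactly as in the proof of Proposition \ref{teo-3}, maximal length gives $L_i=H_i\oplus\bigoplus_{\gamma\in\Gamma_i}L_\gamma$ for a subset $\Gamma_i\subset\Gamma$, and comparing the two decompositions of $L$ one gets $H=\bigoplus_kH_k$ with $H_k=H\cap L_k$. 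Each $\gamma\in\Gamma_i$ is nonzero on $H_i$, since $\gamma$ is a nonzero functional on $H$ while, for $h'\in H_k$ with $k\ne i$, the identity $\gamma(h')v_\gamma=[h',v_\gamma]\in[L_k,L_i]=0$ shows $\gamma$ vanishes on $H_k$. Hence $H_i$ is a maximal abelian subalgebra of $L_i$, the maps ${\rm ad}_{L_i}(h)$, $h\in H_i$, are simultaneously diagonalizable, and $A_{\tilde i}$ inherits a weight-module decomposition with respect to $H_i$, so $(L_i,A_{\tilde i})$ is split. Finally, every ideal of $(L_i,A_{\tilde i})$ is an ideal of $(L_i,A)$ --- the extra conditions $A_hK\subset K$ and $\rho(K)(A_h)L_i\subset K$ for $h\ne\tilde i$ hold trivially because $A_hL_i=0$ and $\rho(L_i)(A_h)\subset A_h$ --- so $(L_i,A_{\tilde i})$ is simple (one has $[L_i,L_i]\ne 0$, $A_{\tilde i}A_{\tilde i}=A_{\tilde i}\ne 0$, $A_{\tilde i}L_i=L_i\ne 0$). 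Theorem \ref{teo-1}-ii) applied to $(L_i,A_{\tilde i})$ then shows that all its nonzero roots are connected, which finishes the proof.

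The step I expect to be the main obstacle is the diagonality of the $A$-action in the second paragraph: one needs both the invariance $\rho(L)(A_j)\subset A_j$ --- where the hypothesis ${\mathcal Z}(A)=0$ enters in an essential way --- and the case analysis ruling out that two distinct simple ideals of $A$ act nontrivially on the same $L_i$. A secondary point that requires care is the verification, in the third paragraph, that $H_i$ is a \emph{maximal} abelian subalgebra of $L_i$, which relies on the orthogonality $[L_i,L_k]=0$.
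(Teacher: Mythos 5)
Your proposal is correct, and its skeleton is the paper's: note that under the hypothesis of all roots and all weights being connected the decomposition of Theorem \ref{lema_final} collapses to a single piece, then invoke Propositions \ref{teo-3} and \ref{teo-33} to split $L$ and $A$ into at most two simple ideals each. Where you genuinely diverge is in the final matching step. The paper simply asserts, after refining $I_{[\gamma]}=V\oplus V'$ and $\mathscr{A}_{[\alpha]}=B\oplus B'$, that Theorem \ref{lema_final} yields the pairing $i\mapsto\tilde i$ between the resulting \emph{simple} components; you instead prove this pairing directly, via the lemma $\rho(L)(A_j)\subset A_j$ (a nice use of ${\mathcal Z}(A)=0$ together with $A_jA_h=0$), the observation $AL_i=L_i$, and the case analysis with $A_{j_2}A_{j_2}=A_{j_2}$ ruling out two simple ideals of $A$ acting on the same $L_i$. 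You also verify explicitly that each $(L_i,A_{\tilde i})$ is a split, simple Lie--Rinehart algebra (MASA $H_i=H\cap L_i$, diagonalizability, weight decomposition of $A_{\tilde i}$), points the paper leaves implicit; this is a real gain in rigor. One small refinement worth adding: in your last step, simplicity of $(L_i,A_{\tilde i})$ in the paper's sense requires that its exceptional ideal be the kernel of \emph{its own} anchor $\rho':L_i\to\mathrm{Der}(A_{\tilde i})$, which a priori could be larger than $\mathrm{Ker}(\rho|_{L_i})$; this is settled by noting that $L_i=A_{\tilde i}L_i$, the $A$-linearity of $\rho$ and your diagonality give $\rho(L_i)(A_h)\subset A_{\tilde i}\bigl(\rho(L_i)(A_h)\bigr)\subset A_{\tilde i}A_h=0$ for $h\neq\tilde i$, whence $\mathrm{Ker}\,\rho'=\mathrm{Ker}(\rho|_{L_i})$ and your argument closes.
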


\begin{proof}
Taking into account Theorem \ref{lema_final}  we can write $$L = \bigoplus\limits_{[\gamma] \in \Gamma/\sim}I_{[\gamma]}$$ as the
direct sum of the family of ideals $I_{[\gamma]}$,  being each $I_{[\gamma]}$ a split  Lie-Rinehart algebra having as roots system $[\gamma]$. Also we can write  $A$ as the direct sum of the  ideals $$A = \bigoplus\limits_{[\alpha] \in \Lambda/\approx}\mathscr{A}_{[\alpha]} $$
in  such a way that any $\mathscr{A}_{[\alpha]}$  has as weights system $[\alpha]$, and that for any $I_{[\gamma]}$ there exists a unique $\mathscr{A}_{[\alpha]}$ satisfying $\mathscr{A}_{[\alpha]}I_{[\gamma]} \neq 0$ and  being $(I_{[\gamma]}, \mathscr{A}_{[\alpha]})$  a
split Lie-Rinehart algebra.

In order to apply Proposition \ref{teo-3} and Proposition \ref{teo-33} to each $(I_{[\gamma]}, \mathscr{A}_{[\alpha]})$, we previously have to observe that the root-multiplicativity of $(L,A)$, Proposition \ref{pro-9} and Theorem \ref{teo2}
show that $[\gamma]$  and $[\alpha]$ have, respectively,  all of their elements $\{[\gamma], [\alpha]\}$-connected (that is, connected through connections contained in $[\gamma]$ and $ [\alpha]$. Any of the $(I_{[\gamma]},\mathscr{A}_{[\alpha]})$ is root-multiplicative as consequence of the root-multiplicativity of $(L,A)$. Clearly $(I_{[\gamma]},\mathscr{A}_{[\alpha]})$ is of maximal length and tight, last fact consequence of tightness of $(L,A)$, Proposition \ref{teo-3} and Proposition \ref{teo-33}.  So we can apply Proposition \ref{teo-3} and Proposition \ref{teo-33} to each $(I_{[\gamma]}, \mathscr{A}_{[\alpha]})$ so as to conclude that any $I_{[\gamma]}$ is either simple or the direct sum of simple ideals $I_{[\gamma]} = V \oplus V'$; and that  any $\mathscr{A}_{[\alpha]}$ is either simple or the direct sum of simple ideals $\mathscr{A}_{[\alpha]} = B \oplus B'$. From here, it is clear that by writing $I_i = V \oplus V'$ and $\mathscr{A}_{j} = B \oplus B'$ if $I_i$ or $\mathscr{A}_{j}$ are not, respectively,  simple, then   Theorem \ref{lema_final} allows as to assert that  the resulting decomposition satisfies the assertions of the theorem.
\end{proof}

\medskip

\end{document}